\numberwithin{equation}{section}
\newtheorem{theorem}{Theorem}[section]
\newtheorem{corollary}[theorem]{Corollary}
\newtheorem{definition}[theorem]{Definition}
\newtheorem{lemma}[theorem]{Lemma}
\newtheorem{claim}{Claim}
\newtheorem{remark}{Remark}
\begin{document}
	\baselineskip=16pt

	\title{Eigenvalues and spanning trees with constrained degree}
	
	\author{ Chang Liu, Jianping Li\thanks{Corresponding author: Jianping Li (lijanping65@nudt.edu.cn)  }
		\\
		\small \emph{College of Sciences, National University of Defense Technology,} \\
		\small  \emph{Changsha, China, 410073.}
	}

	\date{\today}
	
	\maketitle
	
	\begin{abstract}
		In this paper, we study some spanning trees with bounded degree and leaf degree from eigenvalues. For any integer $k\geq2$, a $k$-tree is a spanning tree in which every vertex has degree no more than $k$. Let $T$ be a spanning tree of a connected graph. The leaf degree of $T$ is the maximum number of end-vertices attached to $v$ in $T$ for any $v\in V(T)$. By referring to the technique shown in [Eigenvalues and $[a,b]$-factors in regular graphs, J. Graph Theory. 100 (2022) 458-469], for an $r$-regular graph $G$, we provide an upper bound for the fourth largest adjacency eigenvalue of $G$ to guarantee the existence of a $k$-tree. Moreover, for a $t$-connected graph, we prove a tight sufficient condition for the existence of a spanning tree with leaf degree at most $k$ in terms of spectral radius. This generalizes a result of Theorem 1.5 in [Spectral radius and spanning trees of graphs, Discrete Math. 346 (2023) 113400]. Finally, for a general graph $G$, we present two sufficient conditions for the existence of a spanning tree with leaf degree at most $k$ via the Laplacian eigenvalues of $G$ and the spectral radius of the complement of $G$, respectively.
		
	\end{abstract}

	\textbf{AMS Classification:}  05C50; 05C70
	
	\textbf{Key words:} Eigenvalues; Laplacian eigenvalues; Spanning trees;  Connectivity

	\section{Introduction}
	In this paper, we deal only with simple, connected and undirected graphs. For a graph $G$ on $n$ vertices, we denote its vertex set by $V(G)=\{v_1,v_2,\dots,v_n\}$ and edge set by $E(G)$. The neighborhood of a vertex $v\in V(G)$ is the set $N_{G}(v)=\{u|uv\in E(G)\}$. The degree of $v\in V(G)$, denoted by $d_{G}(v)$, is defined as the number of neighbors of $v$ in $G$, i.e., $d_{G}(v)=|N_{G}(v)|$.  A vertex with degree $1$ is called an end-vertex. The leaf degree of a tree $T$ is the maximum number of end-vertices attached to $v$ in $T$ for any $v\in V(T)$. The adjacency matrix $A(G)$ of $G$ is the $n$-by-$n$ matrix in which entry $a_{ij}$ is 1 or 0 according to whether $v_i$ and $v_j$ are adjacent or not. The eigenvalues of $A(G)$, written $\lambda_1(G)\geq\lambda_2(G)\geq\dots\geq\lambda_n(G)$, is called the eigenvalues of $G$. Note that the largest eigenvalue $\lambda_1(G)$ equals the spectral radius of $G$. The Laplacian matrix of $G$ is defined as $L(G)=D(G)-A(G)$, where $D(G)$ is the diagonal matrix of vertex degrees of $G$. Let $\mu_1(G)\geq\mu_2(G)\geq\dots\geq\mu_n(G)=0$ be eigenvalues of $L(G)$. 
	
	Let $S$ be a nonempty subset of $V(G)$, then $G-S$ and $G[S]$ denote the subgraphs of $G$ induced by $V(G)\backslash S$ and $S$, respectively. The complement of $G$ is denoted by $\overline{G}$. For two vertex-disjoint subsets $S,T$ of $G$, the number of edges between $S$ and $T$ in $G$ is denoted by $e_{G}(S,T)$.  
	If $G_1$ and $G_2$ are two vertex-disjoint graphs, the union of $G_1$ and $G_2$, written $G_1\cup G_2$, is a graph whose vertex set $V(G_1\cup G_2)=V(G_1)\cup V(G_2)$ and edge set $E(G_1\cup G_2)=E(G_1)\cup E(G_2)$. The joint of $G_1$ and $G_2$, written $G_1\vee G_2$, is the graph obtained from the union $G_1\cup G_2$ by adding all possible edges between $G_1$ and $G_2$.

	The relationship between eigenvalues and the existence of various spanning subgraphs is a classical and hot topic, attracting lots of attention (see \cite{boll,duan,fan,gu1,kim1,kim2,lu1,lu2,o2} and the reference therein). In particular, every connected graph contains a spanning tree. As a result, we are interested in finding a spanning tree with degree (leaf degree) constraints, i.e., the degree (leaf degree) of each vertex is bounded from below and/or from above by some fixed number. For an integer $k\geq2$, a $k$-tree is a spanning tree in which every vertex has degree at most $k$. Clearly, a Hamiltonian path is nothing but a spanning $2$-tree. In 1989, Win \cite{win} proved a sufficient condition for a connected graph to contain a $k$-tree, where $k\geq3$ (also see \cite{ell}).
	
	\begin{theorem}\label{ktree}\cite{ell,win}
		Let $G$ be a connected graph and $k\geq3$ be an integer. If for any $S\subseteq V(G)$, $c(G-S)\leq (k-2)|S|+2$, then $G$ has a $k$-tree, where $c(G-S)$ denotes the number of components in $G-S$.
	\end{theorem}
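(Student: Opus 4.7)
The plan is to argue by contradiction via an extremal spanning tree exchange. Assume $G$ satisfies the hypothesis $c(G-S)\le (k-2)|S|+2$ for every $S\subseteq V(G)$ but has no $k$-tree. Among all spanning trees of $G$, choose $T$ minimizing the excess
\[
\phi(T):=\sum_{v\in V(G)}\max\{0,\,d_T(v)-k\},
\]
which is at least $1$ by assumption. Set $B:=\{v:d_T(v)\ge k+1\}$ (the bad vertices) and $S:=\{v:d_T(v)\ge k\}$. I aim to show that this specific $S$ already satisfies $c(G-S)>(k-2)|S|+2$, yielding a contradiction.

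The first ingredient is a local exchange lemma: at any bad vertex $v$ with $T$-neighbors $u_1,\dots,u_p$ (where $p=d_T(v)\ge k+1$) and components $T_1,\dots,T_p$ of $T-v$, there is no edge $xy\in E(G)$ with $x\in V(T_i)$, $y\in V(T_j)$, $i\ne j$, and $d_T(x),d_T(y)\le k-1$. Indeed, $T'=T-vu_i+xy$ is then a spanning tree whose degrees at $x$ and $y$ remain at most $k$ while $v$ loses a unit of excess, so $\phi(T')\le\phi(T)-1$, contradicting minimality. A standard forest identity then yields
\[
c(T-S)=\sum_{v\in S}d_T(v)-|E(T[S])|-|S|+1\ge k|S|+|B|-(|S|-1)-|S|+1=(k-2)|S|+|B|+2\ge (k-2)|S|+3,
\]
using $\sum_{v\in S}d_T(v)\ge k|S|+|B|$, $|E(T[S])|\le|S|-1$, and $|B|\ge 1$.

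To finish I need to transfer this lower bound from $T$ to $G$, i.e., prove $c(G-S)=c(T-S)$ by ruling out $G$-edges $xy$ with $x,y\notin S$ that join distinct components of $T-S$. Any such edge satisfies $d_T(x),d_T(y)\le k-1$, and the unique $T$-path from $x$ to $y$ must intersect $S$ internally; if it passes through some vertex of $B$, the exchange lemma applied there forbids $xy$. The crux of the argument, and where I expect the main obstacle, is the residual case in which this $T$-path meets $S$ only through saturated, non-bad vertices of degree exactly $k$, so the single swap becomes $\phi$-neutral rather than strictly decreasing. To resolve it I plan to sharpen the extremal choice of $T$ with a secondary tie-breaker (for instance, minimizing $|\{v:d_T(v)=k\}|$ or $\sum_v 2^{d_T(v)}$) so that a chain of neutral swaps along the $T$-path can be composed into a strictly improving one, or equivalently, to iterate the exchange argument along the path until it propagates to a bad vertex. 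Once this step is carried out, $c(G-S)\ge c(T-S)\ge(k-2)|S|+3>(k-2)|S|+2$, contradicting the hypothesis.
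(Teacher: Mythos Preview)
The paper does not prove this theorem; it is quoted from \cite{ell,win} and used as a black box in Section~3 (in the proof of Theorem~\ref{thmktree}). There is therefore no in-paper argument to compare your proposal against.

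On its own merits: the extremal-tree set-up, the forest identity
\[
c(T-S)=\sum_{v\in S}d_T(v)-|E(T[S])|-|S|+1,
\]
and the resulting bound $c(T-S)\ge(k-2)|S|+|B|+2$ are all correct, and your exchange lemma does dispose of any $G$-edge whose $T$-path from $x$ to $y$ meets a genuinely bad vertex. The gap you flag, however, is the real content of the theorem, and the specific tie-breakers you suggest do \emph{not} close it. Take $d_T(w)=k$, let $u$ be the neighbour of $w$ toward $x$ on the path (so $d_T(u)\le k-1$), and suppose $d_T(x)=d_T(y)=k-1$. The swap $T-wu+xy$ leaves $\phi$ unchanged, \emph{increases} $|\{v:d_T(v)=k\}|$ by one ($w$ leaves but both $x$ and $y$ enter), and increases $\sum_v 2^{d_T(v)}$ by $2^{k-1}-2^{d_T(u)-1}>0$. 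Trying other edges of the path does no better in the worst case: if the path carries a single degree-$k$ vertex flanked by degree-$(k-1)$ neighbours, every admissible swap is at best neutral for either potential. So a secondary scalar potential of this kind cannot force a strict improvement.

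The standard proofs in \cite{win,ell} do not fix $S=\{v:d_T(v)\ge k\}$ in advance and then certify $c(G-S)=c(T-S)$ via a potential. Win builds $S$ dynamically, starting from a bad vertex and enlarging $S$ by a degree-$k$ vertex only when a merging $G$-edge forces it, maintaining the inequality $c(G-S)>(k-2)|S|+2$ throughout; Ellingham--Zha argue via $k$-walks. Your plan is a sound outline of the classical route, but at the ``residual'' step it needs a genuinely different mechanism (the iterative growth of $S$, with bookkeeping of how many components are lost versus how much $|S|$ grows) rather than a sharper choice of $T$.
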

	In 2021, Fan et al. \cite{fan} show some upper bounds for the
	adjacency spectral radius and the signless Laplacian spectral radius to guarantee the existence of a $k$-tree in a connected graph $G$. Later, Gu and Liu \cite{gu2} provided a sufficient condition for the existence of a $k$-tree via Laplacian eigenvalues.

	In 2001, Kaneko \cite{kan} gave a definition of leaf degree of a spanning tree and proved a necessary and sufficient condition for the existence of a spanning tree with leaf degree at most $k$ in connected graphs.
	\begin{theorem}\label{leafk}\cite{kan}
		Let $G$  be a connected graph and $k\geq1$ be an integer. Then $G$ has a spanning tree with leaf degree at most $k$ if and only if $i(G-S)<(k+1)|S|$ for every nonempty subset $S\subseteq V(G)$, where $i(G-S)$ is the number of isolated vertices of $G-S$. 
	\end{theorem}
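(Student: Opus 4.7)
This is an ``if and only if'' statement, so I treat the two directions separately: the necessary direction admits a short direct proof, and for the sufficient direction I sketch an extremal-tree plan.

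\textbf{Necessity.} Fix a spanning tree $T$ of $G$ with leaf degree at most $k$ and a nonempty $S\subseteq V(G)$. Let $I$ be the set of isolated vertices of $G-S$; each $v\in I$ satisfies $N_T(v)\subseteq N_G(v)\subseteq S$. Partition $I=I_1\cup I_2$ according to whether $d_T(v)=1$ or $d_T(v)\geq 2$. Every $v\in I_1$ is a leaf of $T$ attached to a vertex of $S$, so the leaf-degree bound yields $|I_1|\leq k|S|$. The $T$-edges between $I$ and $S$ form a bipartite subforest of $T$ on the vertex set $S\cup I$, hence number at most $|S|+|I|-1$; on the other hand they number $\sum_{v\in I}d_T(v)\geq|I_1|+2|I_2|$. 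Combining gives $|I_2|\leq|S|-1$, and therefore
\[
i(G-S)=|I_1|+|I_2|\leq k|S|+|S|-1=(k+1)|S|-1<(k+1)|S|.
\]

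\textbf{Sufficiency.} The plan is to argue by contradiction via an extremal spanning tree. Assume $i(G-S)<(k+1)|S|$ for every nonempty $S$, yet no spanning tree of $G$ has leaf degree at most $k$. Call a vertex \emph{bad in $T$} if $\ell_T(v)>k$, write $B(T)$ for the set of bad vertices, and choose $T$ to minimize $|B(T)|$ first and then the secondary measure $\sum_{v\in B(T)}\ell_T(v)$. Set $B=B(T)$, so $B\neq\emptyset$ by assumption.

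The technical heart is to show, via edge-swap arguments, that for every $v\in B$ and every leaf $u$ of $T$ attached to $v$ one has $N_G(u)\subseteq B$. Indeed, for any $w\in N_G(u)\setminus\{v\}$, the spanning tree $T':=T-uv+uw$ moves $u$ from a leaf of $v$ to a leaf of $w$; a case analysis on $(\ell_T(v),\ell_T(w))$ shows that $T'$ strictly decreases $(|B|,\sum_{v\in B}\ell_T(v))$ lexicographically whenever $w\notin B$, \emph{unless} the swap merely replaces one bad vertex by another (the borderline case). Once $N_G(u)\subseteq B$ is forced for every such $u$, each leaf attached to a bad vertex becomes an isolated vertex of $G-B$; since distinct vertices of $B$ contribute disjoint leaf-sets of size $\geq k+1$,
\[
i(G-B)\ \geq\ \sum_{v\in B}\ell_T(v)\ \geq\ (k+1)|B|,
\]
contradicting the hypothesis.

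The main obstacle is precisely the borderline case $\ell_T(v)=k+1$ and $\ell_T(w)=k$, where a single swap leaves both chosen measures unchanged. The plan for handling it is either to enrich the extremal measure (for instance by also minimizing the number of $T$-edges between $B$ and $V\setminus B$), or to follow a cascading sequence of swaps along a path in $T$ that shifts badness out of $B$ entirely. Designing a potential function that strictly decreases under every such swap is the delicate point, and it is where Kaneko's original argument invests most of its effort.
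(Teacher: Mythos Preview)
The paper does not prove this theorem at all: it is quoted verbatim from Kaneko~\cite{kan} as a tool, so there is no ``paper's own proof'' to compare against. What you have written is an attempt at Kaneko's result itself, not at anything the present paper does.

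Your necessity argument is clean and correct. The count $|I_1|\le k|S|$ uses the leaf-degree hypothesis as stated, and the forest bound $\sum_{v\in I}d_T(v)\le |S|+|I|-1$ is valid because all $T$-edges out of $I$ land in $S$ and the resulting bipartite subgraph sits inside a tree.

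Your sufficiency direction is, by your own admission, only a plan. Two points deserve attention. First, the swap $T'=T-uv+uw$ may change leaf degrees at a \emph{third} vertex: if $d_T(v)=2$, then after the swap $v$ itself becomes a leaf, so the leaf degree of $v$'s other $T$-neighbour increases by one; this can create a new bad vertex outside $\{v,w\}$ and is not covered by the two-vertex case analysis you sketch. Second, even setting that aside, the borderline case $\ell_T(v)=k+1$, $\ell_T(w)=k$ that you flag is exactly where all the work lies; in Kaneko's proof it is resolved not by a single enriched potential but by a more global structural argument, and your sentence about ``cascading swaps'' or ``enriching the extremal measure'' is a hope rather than a mechanism. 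So the sufficiency direction remains a genuine gap, as you essentially concede.
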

	Ao et al. \cite{ao} gave a tight spectral condition for the existence of a spanning tree with leaf degree at most $k$ in a connected graph and determined extremal graphs, where $k\geq1$ is an integer.
	
	There is a long history of research on factors (some spanning subgraphs with constrained degree) from graph eigenvalues in regular graphs. If every vertex in a graph has the same degree of a positive integer $r$, the graph is said to be $r$-regular. Brouwer and Haemers \cite{brou} started to investigate the relations between the third largest eigenvalue and the existence of a perfect matching (1-factor) in regular graphs. Their results were improved in \cite{cioa} and extended in \cite{kim2,lu3} ($[1,b]$-odd factor). In 2022, for an $h$‐edge‐connected $r$‐regular graph $G$, O \cite{o1} presented some sufficient spectral conditions for the existence of an even $[a,b]$‐factor or an odd $[a,b]$‐factor. Very recently, the connections between the existence of a $(g,f)$-parity factor and certain eigenvalues in an $h$-edge-connected graph $G$ with given minimum degree have been considered by Kim and O \cite{kim1}. 
	
	The existence of spanning subgraphs is in close relationship with the connectivity of graphs. The vertex connectivity of $G$, also called ``connectivity" simply, is the minimum size of a vertex cut, i.e., the minimum number of vertices of which the deletion induces a disconnected graph or a single vertex. For a positive integer $t$, we say $G$ is a $t$-connected graph if its connectivity at least $t$. In 2020, O \cite{o2} started to investigate the relations between the spectral radius of a connected graph and the existence of a perfect matching. Kim et al. \cite{kim3} provided a sharp upper bound for the spectral radius in a connected graph with fixed order and bounded matching number. In 2022, Zhang \cite{zhang} generalized the results in \cite{kim3,o2}  among $t$-connected graphs, where $t\geq1$.

	Inspired by the works in \cite{ao,fan,gu2,kim1,kim2,o1,zhang}, it is natural to consider the existence of two spanning trees with constrained degree (leaf degree) of connected graph. By referring to the technique shown in  \cite{kim1,o1}, for a $r$-regular graph $G$, we provide an upper bound for the fourth largest adjacency eigenvalue of $G$ to guarantee the existence of a $k$-tree. For a $t$-connected graph, we prove a tight sufficient condition for the existence of a spanning tree with leaf degree at most $k$ in terms of spectral radius. This generalizes a result of Theorem 1.5 in \cite{ao}. Finally, for a general graph $G$, we present two sufficient conditions for the existence of a spanning tree with leaf degree at most $k$ via the Laplacian eigenvalues of $G$ and the spectral radius of the complement of $G$, respectively.

	\section{Preliminaries}
	
	In this section, we introduce some notions and lemmas, which are useful in the proof of our main results.
	\begin{lemma}\label{radius}\cite{brou2,gods}
		If $H$ is an induced subgraph of $G$, then $\lambda_{i}(H)\leq \lambda_{i}(G)$. In particular, if $H$ is proper, then $\lambda_{i}(H)<\lambda_{i}(G)$ for $1\leq i\leq |V(H)|$.
	\end{lemma}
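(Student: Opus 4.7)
The plan is to derive the statement from Cauchy's interlacing theorem applied to the adjacency matrix, which is the standard route through the Courant-Fischer min-max characterization. Since $H$ is an induced subgraph of $G$, choosing an ordering of $V(G)$ in which the vertices of $H$ come first displays $A(H)$ as a leading principal submatrix of $A(G)$. First I would record the min-max formula
\[
\lambda_i(M) \;=\; \max_{\dim S = i}\;\min_{0 \neq x \in S}\frac{x^{\top} M x}{x^{\top} x}
\]
for any real symmetric matrix $M$. The weak bound $\lambda_i(H)\le\lambda_i(G)$ is then immediate: any $i$-dimensional subspace of $\mathbb{R}^{|V(H)|}$ on which $A(H)$ realizes its $i$-th largest eigenvalue can be padded with zeros on $V(G)\setminus V(H)$ to yield an $i$-dimensional subspace of $\mathbb{R}^{|V(G)|}$ whose Rayleigh quotients against $A(G)$ agree exactly with the original ones, so the outer maximum in the formula for $\lambda_i(G)$ can only be larger.

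For the strict inequality when $H$ is a proper induced subgraph, I would argue by contradiction. Suppose $\lambda_i(H)=\lambda_i(G)$ for some $i\le|V(H)|$. The equality case of Cauchy interlacing, applied to the lifting construction above, produces a non-zero eigenvector $x$ of $A(G)$ for the eigenvalue $\lambda_i(G)$ whose support lies entirely in $V(H)$. Reading off the rows of $(A(G)-\lambda_i(G)I)x=0$ indexed by vertices $u\in V(G)\setminus V(H)$ yields $\sum_{v\sim u}x_v=0$, which constrains $x$ across the boundary between $H$ and its complement; combined with the connectedness of $G$ (equivalently, the irreducibility of $A(G)$), a short Perron-type propagation argument forces $x\equiv 0$, the desired contradiction.

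The main obstacle is the strict-inequality clause: the weak bound is entirely routine once Courant-Fischer is in hand, but the strict statement genuinely needs the equality case of Cauchy interlacing together with an irreducibility/connectedness input to rule out $x$ being supported on $V(H)$. Once that subtlety is dispatched via the eigenvalue equation on $V(G)\setminus V(H)$, the conclusion $\lambda_i(H)<\lambda_i(G)$ for $1\le i\le|V(H)|$ follows, completing the proof.
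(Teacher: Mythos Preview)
The paper does not supply a proof of this lemma; it is quoted with citations to Brouwer--Haemers and Godsil--Royle and used as a black box. So there is no argument in the paper to compare your approach against.

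More to the point, the strict-inequality clause as written is false for $i>1$, and your proposal breaks down precisely where it must. Take $G=K_4$ and $H=K_3$: then $H$ is a proper induced subgraph of the connected graph $G$, yet $\lambda_2(H)=\lambda_3(H)=-1=\lambda_2(G)=\lambda_3(G)$. In this example the $(-1)$-eigenspace of $A(K_4)$ is three-dimensional and contains nonzero vectors supported on any three of the four vertices, so an eigenvector of $A(G)$ supported on $V(H)$ genuinely exists. Your ``Perron-type propagation'' step --- from $x_u=0$ for $u\notin V(H)$ and $\sum_{v\sim u}x_v=0$ to $x\equiv 0$ --- has no force here: that single linear relation does not let you conclude any individual $x_v$ vanishes, and there is nothing to propagate. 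Perron--Frobenius gives positivity only for the top eigenvector, not for eigenvectors attached to other eigenvalues.

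Your argument is correct for $i=1$: the Perron eigenvector of a connected $G$ is strictly positive and hence cannot be supported on a proper subset $V(H)$, so the contradiction goes through. That is also the only strict case the paper actually invokes (Lemma~4.1 compares largest eigenvalues; the use in Theorem~3.2 is the weak inequality). The general-$i$ strict inequality appears to be a mis-statement in the paper, and your proof cannot be salvaged for $i\ge 2$ because the assertion itself fails.
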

	
	\begin{lemma}\cite{gods}\label{nonnegative}
		If $A$ is a real nonnegative matrix of order $n$, and $A'$ is a real nonnegative matrix of order $n$ such that $A-A'$ is nonnegative, then $\lambda_{1}(A')\leq\lambda_{1}(A)$.
	\end{lemma}

	
	\begin{definition}\label{equpart}\cite{brou2}
		Let $M$ be a real $n\times n$ matrix and let $\pi=\{P_1,P_2,\dots,P_t\}$ be a partition of $[n]=\{1,2,\dots,n\}$. Then the matrix $M$ can be described in the following block form
		\begin{equation*}
			M=\begin{bmatrix}
				M_{1,1} & M_{1,2} & \cdots & M_{1,t}\\
				M_{2,1} & M_{2,2} & \cdots & M_{2,t}\\
				\vdots & \vdots & \ddots & \vdots \\
				M_{t,1} & M_{t,2} & \cdots & M_{t,t}
			\end{bmatrix}.
		\end{equation*}
		The quotient matrix of $M$ with respect to the partition $\pi$ is the matrix $B_{\pi}=(b_{i,j})_{i,j=1}^{t}$ with
		\begin{equation*}
			b_{i,j}=\frac{1}{|P_i|}\boldsymbol{j}^{T}_{|P_i|}M_{i,j}\boldsymbol{j}_{|P_i|}
		\end{equation*} 
		for all $i,j\in\{1,2,\dots,t\}$, where $\boldsymbol{j}_n$ denotes the all ones vector in $\mathbb{R}^n$. The partition $\pi$ is called an equitable partition if each block $M_{i,j}$ of $M$ has constant row sum $b_{i,j}$. In this case, the quotient matrix $B_{\pi}$ is called an equitable quotient matrix of $M$.
	\end{definition}
	
	The largest eigenvalue of a matrix $M$ is denoted by $\rho(M)$.
	
	\begin{lemma}\label{equradius}\cite{brou2}
		Let $M$ be a real symmetric matrix, and $M$ is nonnegative and irreducible. Let $B$ be a quotient matrix of $M$ as defined in Definition \ref{equpart}. The eigenvalues of $B$ interlace the eigenvalues of $M$. If $B$ is an equitable quotient matrix of $M$,  then $\rho(M) = \rho(B)$.
	\end{lemma}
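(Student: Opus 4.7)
The plan is to realize the quotient matrix $B$ as the compression of $M$ onto a $t$-dimensional subspace determined by the partition, and then apply the Cauchy interlacing theorem to obtain the interlacing statement, reserving the equitable case for a separate argument based on Perron--Frobenius.

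First I would introduce the characteristic matrix $S \in \mathbb{R}^{n\times t}$ of the partition $\pi$, where $S_{ij}=1$ if $i\in P_{j}$ and $0$ otherwise. A direct computation gives $S^{T}S=\mathrm{diag}(|P_{1}|,\ldots,|P_{t}|)$, so the columns of $S$ are mutually orthogonal, and unwinding the definition shows that
\begin{equation*}
B=(S^{T}S)^{-1}S^{T}MS.
\end{equation*}
Setting $\widehat{S}=S(S^{T}S)^{-1/2}$, one checks that $\widehat{S}^{T}\widehat{S}=I_{t}$ and that $\widehat{S}^{T}M\widehat{S}=(S^{T}S)^{1/2}B(S^{T}S)^{-1/2}$, so $\widehat{S}^{T}M\widehat{S}$ and $B$ are similar and hence have the same spectrum.

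Next I would apply the Cauchy interlacing theorem to the symmetric matrix $M$ and its compression $\widehat{S}^{T}M\widehat{S}$ onto the column space of $\widehat{S}$. This immediately gives
\begin{equation*}
\lambda_{i}(M)\ge \lambda_{i}(B)\ge \lambda_{n-t+i}(M),\qquad 1\le i\le t,
\end{equation*}
which is exactly the interlacing claim and, in particular, $\rho(B)\le \rho(M)$.

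For the equitable case I would use the constant row-sum condition on each block $M_{i,j}$ to verify the matrix identity $MS=SB$; this is the central algebraic observation of the argument. From $MS=SB$ one gets, for every eigenpair $(\lambda,v)$ of $B$, that $M(Sv)=\lambda(Sv)$, so $\mathrm{spec}(B)\subseteq \mathrm{spec}(M)$ and in particular $\rho(B)\le\rho(M)$. For the reverse inequality, since $M$ is nonnegative and irreducible, Perron--Frobenius yields a strictly positive eigenvector $x$ with $Mx=\rho(M)x$. Transposing $MS=SB$ (using symmetry of $M$) gives $S^{T}M=B^{T}S^{T}$, so
\begin{equation*}
B^{T}(S^{T}x)=S^{T}Mx=\rho(M)\,S^{T}x,
\end{equation*}
and since $x>0$ we have $S^{T}x>0\ne 0$. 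Hence $\rho(M)$ is an eigenvalue of $B^{T}$, and therefore of $B$, which forces $\rho(M)\le\rho(B)$ and completes the equality.

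The main obstacle I anticipate is not technical but conceptual bookkeeping: one must be careful to distinguish the non-symmetric quotient $B=(S^{T}S)^{-1}S^{T}MS$ (whose spectrum carries the information we want) from the symmetric compression $\widehat{S}^{T}M\widehat{S}$ (to which Cauchy interlacing actually applies), and to verify cleanly that the equitable hypothesis is precisely what makes the intertwining relation $MS=SB$ hold. Once that identity is in place, the Perron--Frobenius step to upgrade $\rho(B)\le\rho(M)$ to equality is short.
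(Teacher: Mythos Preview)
Your argument is correct and is essentially the standard proof found in the cited reference \cite{brou2}: realize $B$ via the characteristic matrix $S$, symmetrize to $\widehat S^{T}M\widehat S$ to invoke Cauchy interlacing, and in the equitable case use the intertwining $MS=SB$ together with Perron--Frobenius to force $\rho(M)\in\mathrm{spec}(B)$. There is nothing to compare against in the paper itself, since the lemma is quoted from \cite{brou2} without proof; your write-up simply reproduces the argument the citation points to.
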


	\begin{lemma}[The Cauchy's interlace theorem \cite{hwang}]\label{cauchy}
		Let two sequences of real number, $\lambda_{1}\geq\lambda_{2}\geq\dots\geq\lambda_n$ and $\eta_1\geq\eta_2\geq\dots\geq\eta_{n-1}$, be the eigenvalues of symmetric matrices $A$ and $B$, respectively. If $B$ is a principal submatrix of $A$, then the
		eigenvalues of $B$ interlace the eigenvalues of $A$, i.e., $\lambda_{1}\geq\eta_1\geq\lambda_{2}\geq\dots\geq\eta_{n-2}\geq\lambda_{n-1}\geq\eta_{n-1}\geq\lambda_{n}$.
	\end{lemma}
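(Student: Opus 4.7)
The plan is to derive both halves of the interlacing chain from the Courant--Fischer min-max characterization of eigenvalues of a symmetric matrix. For any real symmetric matrix $M$ of order $m$ with eigenvalues $\theta_1\geq\theta_2\geq\cdots\geq\theta_m$, one has
\[
\theta_k \;=\; \max_{\substack{U\subseteq\mathbb{R}^{m}\\ \dim U=k}}\ \min_{0\neq x\in U}\frac{x^{T}Mx}{x^{T}x} \;=\; \min_{\substack{U\subseteq\mathbb{R}^{m}\\ \dim U=m-k+1}}\ \max_{0\neq x\in U}\frac{x^{T}Mx}{x^{T}x}.
\]
Any principal submatrix of a symmetric matrix is symmetric, so this variational formula is available for both $A$ and $B$, and no eigenvector computations will be needed: the entire argument is purely dimensional.

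After permuting indices I may assume that $B$ is obtained from $A$ by deleting the last row and column; equivalently, $B$ represents the quadratic form $x\mapsto x^{T}Ax$ restricted to the hyperplane $V=\{x\in\mathbb{R}^{n}:x_{n}=0\}$. Let $\iota:\mathbb{R}^{n-1}\to V$ denote the canonical isometric embedding, so that $(\iota y)^{T}A(\iota y)=y^{T}By$ and $(\iota y)^{T}(\iota y)=y^{T}y$ for every $y\in\mathbb{R}^{n-1}$. For the upper bound $\eta_{k}\leq\lambda_{k}$, I would choose a $k$-dimensional subspace $U\subseteq\mathbb{R}^{n-1}$ attaining the maximum in the max-min formula for $\eta_{k}$; then $\iota(U)$ is a $k$-dimensional subspace of $\mathbb{R}^{n}$, and since the max-min formula for $\lambda_{k}$ ranges over \emph{all} $k$-dimensional subspaces of $\mathbb{R}^{n}$, one gets $\eta_{k}\leq\lambda_{k}$. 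For the lower bound $\eta_{k}\geq\lambda_{k+1}$, I would use the dual (min-max) form: an $(n-1)-k+1=n-k$ dimensional subspace $U\subseteq\mathbb{R}^{n-1}$ attaining the min-max for $\eta_{k}$ produces, under $\iota$, a subspace of $\mathbb{R}^{n}$ of dimension $n-k=n-(k+1)+1$, which is precisely the dimension appearing in the min-max description of $\lambda_{k+1}$; hence $\lambda_{k+1}\leq\eta_{k}$.

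The main technical point is bookkeeping: one must keep track of the two different dimensions ($k$ in the max-min and $m-k+1$ in the min-max) and confirm that the subspace $\iota(U)$, constrained to lie in the hyperplane $V$, is still a legitimate competitor in the unrestricted optimization over subspaces of $\mathbb{R}^{n}$. Once both inequalities $\lambda_{k+1}\leq\eta_{k}\leq\lambda_{k}$ are in hand for every $k\in\{1,2,\dots,n-1\}$, the claimed chain
\[
\lambda_{1}\geq\eta_{1}\geq\lambda_{2}\geq\cdots\geq\eta_{n-2}\geq\lambda_{n-1}\geq\eta_{n-1}\geq\lambda_{n}
\]
follows by concatenation, completing the proof of the lemma.
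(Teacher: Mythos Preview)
Your argument via the Courant--Fischer min--max principle is correct and complete: the upper bound $\eta_k\leq\lambda_k$ follows because every $k$-dimensional competitor for $\eta_k$ embeds (via $\iota$) as a $k$-dimensional competitor for $\lambda_k$, and the lower bound $\lambda_{k+1}\leq\eta_k$ follows because an $(n-k)$-dimensional competitor for $\eta_k$ embeds as an $(n-k)$-dimensional competitor for $\lambda_{k+1}$. The bookkeeping is right.

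As for comparison with the paper: the paper does not prove this lemma at all. It is stated as a cited result, with the proof deferred entirely to the reference \cite{hwang} (Hwang, \emph{Amer.\ Math.\ Monthly} 111 (2004), 157--159). So there is no ``paper's own proof'' to compare against. For what it is worth, Hwang's note actually takes a different route from yours: it proceeds by induction on $n$, writing $A$ as a bordered matrix and using Sylvester's determinant identity to analyze the characteristic polynomials, rather than invoking Courant--Fischer. Your variational approach is the more common textbook proof and is arguably cleaner for real symmetric matrices; Hwang's determinant-based argument has the mild advantage of being self-contained (it does not presuppose the min--max theorem) and extends verbatim to the Hermitian case.
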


	\section{The relationship between the fourth largest adjacency eigenvalues and $k$-trees of $r$-regular graphs}
	In this section, we will give some sufficient spectral conditions to guarantee
	the existence of $k$-trees in an $r$-regular graph. Firstly, we construct a special graph $H_{r,k}$. We will prove that $G$ contains no $k$-trees if $G$ contains $H_{r,k}$ as its subgraph. 
	
	\begin{definition}
		\begin{equation*}
			H_{r,k}=\begin{cases}
				K_{r-\lceil\frac{r}{k-2}\rceil+3 }\vee\overline{\frac{\lceil\frac{r}{k-2}\rceil-2}{2}K_2}, & \mbox{if } \lceil\frac{r}{k-2}\rceil\geq3 \mbox{ is even},\\
				K_{r-\lceil\frac{r}{k-2}\rceil+2 }\vee\overline{\frac{\lceil\frac{r}{k-2}\rceil-1}{2}K_2}, &	\mbox{if } \lceil\frac{r}{k-2}\rceil\geq3 \mbox{ is odd}, \\
				\left( K_1\cup K_2\right)\vee \overline{\frac{r-1}{2}K_2}, & \mbox{if } \lceil\frac{r}{k-2}\rceil=2 \mbox{ and } r \mbox{ is odd}.
			\end{cases}
		\end{equation*}
	\end{definition}

	The spectral radius of $H_{r,k}$ is denoted by $\rho_{r,k}$. By calculation, we get the following lemma.
	\begin{lemma}\label{lem_spec1}
		\begin{equation*}
			\rho_{r,k} = \begin{cases}
				\frac{r}{2}+\frac{\sqrt{r^2+4r-4\lceil\frac{r}{k-2}\rceil+12}}{2}-1, & \mbox{if } \lceil\frac{r}{k-2}\rceil\geq3 \mbox{ is even},\\
				\frac{r}{2}+\frac{\sqrt{r^2+4r-4\lceil\frac{r}{k-2}\rceil+8}}{2}-1, & \mbox{if } \lceil\frac{r}{k-2}\rceil\geq3 \mbox{ is odd},\\
				\theta(r), & \mbox{if } \lceil\frac{r}{k-2}\rceil=2 \mbox{ and } r \mbox{ is odd},
			\end{cases}
		\end{equation*}
	where $\theta(r)$ is the largest root of the equation $x^3 + (2 - r)x^2 - 2rx + r-1=0$.
	\end{lemma}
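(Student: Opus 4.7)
The plan is to invoke Lemma \ref{equradius} with an equitable vertex partition adapted to the join structure of $H_{r,k}$, so that computing $\rho_{r,k}$ reduces to finding the largest eigenvalue of a small quotient matrix in each of the three cases. Since every $H_{r,k}$ is connected through the join, its adjacency matrix is nonnegative, symmetric and irreducible, so Lemma \ref{equradius} applies and $\rho_{r,k}$ coincides with the spectral radius of the quotient matrix.

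For the first two cases, set $c=\lceil r/(k-2)\rceil\ge 3$ and write $H_{r,k}=K_a\vee\overline{mK_2}$, where $(a,2m)=(r-c+3,c-2)$ in the even subcase and $(a,2m)=(r-c+2,c-1)$ in the odd subcase; in both, $a+2m=r+1$. Partitioning $V(H_{r,k})$ into $V_1=V(K_a)$ and $V_2=V(\overline{mK_2})$, each vertex of $V_1$ has $a-1$ neighbours in $V_1$ and, by the join, all $2m$ neighbours of $V_2$, while each vertex of $V_2$ has $a$ neighbours in $V_1$ and $2m-2$ neighbours in $V_2$, missing only its matching partner. The partition is therefore equitable with quotient matrix
\[B=\begin{pmatrix} a-1 & 2m \\ a & 2m-2 \end{pmatrix},\]
whose characteristic polynomial simplifies to $x^2-(r-2)x-(2r-c+2)=0$ in the even case and $x^2-(r-2)x-(2r-c+1)=0$ in the odd case. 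The corresponding discriminants reduce to $r^2+4r-4c+12$ and $r^2+4r-4c+8$ respectively, and taking the larger root produces the first two formulas stated in the lemma.

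For the third case, the summand $K_1\cup K_2$ is not vertex-homogeneous, so I refine the partition into three classes: the single vertex of $K_1$, the two vertices of $K_2$, and the $r-1$ vertices of $\overline{\frac{r-1}{2}K_2}$. Recording that there are no edges between the first two classes (since $K_1\cup K_2$ is a disjoint union) but all join edges to the third class, and that the internal degree in $\overline{\frac{r-1}{2}K_2}$ is $r-3$, the equitable quotient matrix reads
\[B=\begin{pmatrix} 0 & 0 & r-1 \\ 0 & 1 & r-1 \\ 1 & 2 & r-3 \end{pmatrix}.\]
Expanding $\det(xI-B)$ along the first row, which contains two zeros, yields the cubic $x^3+(2-r)x^2-2rx+r-1$, and by Lemma \ref{equradius} $\rho_{r,k}$ is identified with its largest real root $\theta(r)$.

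Nothing in this argument is truly subtle; the only care required is in case 3, where one must not conflate the three orbits of $K_1\cup K_2$ into a single class, and in the routine arithmetic needed to massage the quadratic discriminants into the precise forms stated in the lemma.
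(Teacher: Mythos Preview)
Your proposal is correct and follows exactly the same route as the paper: in each case you form the natural equitable partition of $V(H_{r,k})$ induced by the join structure, write down the resulting $2\times2$ or $3\times3$ quotient matrix, and invoke Lemma~\ref{equradius} to identify $\rho_{r,k}$ with the largest root of the corresponding characteristic polynomial. The quotient matrices and cubic you obtain coincide with the paper's $B_1$, $B_2$, $B_3$, so there is no methodological difference---your write-up simply packages the first two cases via the parameters $(a,2m)$ and gives slightly more detail on the polynomial expansions.
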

	\begin{proof}
		By an easy calculation, we can get some equitable quotient matrices of $A(H_{r,k})$, listed below. The equitable quotient matrix of $A\left( K_{r-\lceil\frac{r}{k-2}\rceil+3 }\vee\overline{\frac{\lceil\frac{r}{k-2}\rceil-2}{2}K_2} \right) $ with partition $\left\lbrace V(K_{r-\lceil\frac{r}{k-2}\rceil+3 }),\right. \\ \left. V\left( \overline{\frac{\lceil\frac{r}{k-2}\rceil-2}{2}K_2}\right) \right\rbrace$ equals
		\begin{equation*}
			B_1=\begin{bmatrix}
				r-\lceil\frac{r}{k-2}\rceil+2 & \lceil\frac{r}{k-2}\rceil-2 \\
				r-\lceil\frac{r}{k-2}\rceil+3 & \lceil\frac{r}{k-2}\rceil-4
			\end{bmatrix}.
		\end{equation*}
		
		The equitable quotient matrix of $A\left( K_{r-\lceil\frac{r}{k-2}\rceil+2 }\vee\overline{\frac{\lceil\frac{r}{k-2}\rceil-1}{2}K_2} \right) $ with partition $\left\lbrace V\left( K_{r-\lceil\frac{r}{k-2}\rceil+2 }\right),\right. \\ \left. V\left(\overline{\frac{\lceil\frac{r}{k-2}\rceil-1}{2}K_2} \right)  \right\rbrace $ is
		\begin{equation*}
			B_2=\begin{bmatrix}
				r-\lceil\frac{r}{k-2}\rceil+1 & \lceil\frac{r}{k-2}\rceil-1 \\
				r-\lceil\frac{r}{k-2}\rceil+2 & \lceil\frac{r}{k-2}\rceil-3
			\end{bmatrix}.
		\end{equation*}
		
		The equitable quotient matrix of $A\left( \left( K_1\cup K_2\right)\vee \overline{\frac{r-1}{2}K_2}\right) $ with partition $\left\lbrace V(K_1), V(K_2), V\left( \overline{\frac{r-1}{2}K_2}\right)  \right\rbrace $ has the form
		\begin{equation*}
			B_3=\begin{bmatrix}
				0 & 0 & r-1 \\
				0 & 1 & r-1\\
				1 & 2 & r-3
			\end{bmatrix},
		\end{equation*}
		and the characteristic polynomial of $B_3$ is $\kappa(x)=x^3 + (2 - r)x^2 - 2rx + r-1$.
	
	
		By Lemma \ref{equradius}, we obtain that
		\begin{equation*}
			\begin{split}
				\rho(B_1)&=\lambda_{1}\left( K_{r-\lceil\frac{r}{k-2}\rceil+3 }\vee\overline{\frac{\lceil\frac{r}{k-2}\rceil-2}{2}K_2} \right)=\frac{r}{2}+\frac{\sqrt{r^2+4r-4\lceil\frac{r}{k-2}\rceil+12}}{2}-1,\\
				\rho(B_2)&=\lambda_{1}\left( K_{r-\lceil\frac{r}{k-2}\rceil+2 }\vee\overline{\frac{\lceil\frac{r}{k-2}\rceil-1}{2}K_2} \right)=\frac{r}{2}+\frac{\sqrt{r^2+4r-4\lceil\frac{r}{k-2}\rceil+8}}{2}-1,\\
				\rho(B_3)&=\lambda_{1}\left( \left( K_1\cup K_2\right)\vee \overline{\frac{r-1}{2}K_2}\right) =\theta(r),
			\end{split}
		\end{equation*}
	where $\theta(r)$ is the largest root of equation $x^3 + (2 - r)x^2 - 2rx + r-1=0$. This completes the proof.
	\end{proof}

	
	
	\begin{theorem}\label{thmktree}
		Let $r\geq3$ and $3\leq k<r$ be two integers. Let $G$ be an $r$-regular graph on $n$ vertices. If  $n$ is sufficiently larger with compared to $r$ and $\lambda_{4}(G)$ is smaller than $\rho_{r,k}$, then $G$ has a $k$-tree.
	\end{theorem}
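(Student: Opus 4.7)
The plan is to argue by contradiction via Win's criterion (Theorem \ref{ktree}) combined with an edge-counting estimate that exploits the $r$-regularity. Specifically, I will suppose $G$ is $r$-regular with $\lambda_4(G)<\rho_{r,k}$ yet contains no $k$-tree. Theorem \ref{ktree} then yields a set $S\subseteq V(G)$ with $c(G-S)\geq (k-2)|S|+3$; label the components of $G-S$ as $C_1,\ldots,C_c$. The $r$-regularity gives $\sum_{i=1}^{c} e(C_i,S) = r|S|-2e(G[S]) \leq r|S|$.

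Setting $m:=\lceil r/(k-2)\rceil$, at most $(k-2)|S|$ of these components can satisfy $e(C_i,S)\geq m$, so at least three of them (relabel them $C_1,C_2,C_3$) must satisfy $e(C_i,S)\leq m-1$. Combined with the identities $2e(G[C_i])=r|C_i|-e(C_i,S)$ and $e(G[C_i])\leq\binom{|C_i|}{2}$, this forces each such $C_i$ to be a ``near-clique'' of size roughly $r+1$, matching a building block of $H_{r,k}$ in the three cases of Lemma \ref{lem_spec1}. When $r$ is not divisible by $k-2$ the inequality is strict and produces a fourth small-boundary component outright.

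The spectral step is then to assemble an induced subgraph $H$ of $G$ from $C_1,C_2,C_3$ together with an appropriate subset of $S$ (or, in the easier subcase, a fourth such component $C_4$), and to choose a vertex partition of $V(H)$ whose quotient matrix matches, or is entrywise bounded below by, the matrix $B_1$, $B_2$, or $B_3$ exhibited in the proof of Lemma \ref{lem_spec1}, according to the parity/value of $m$ and $r$. Applying Lemma \ref{radius} together with Lemma \ref{equradius}, and Cauchy's interlacing (Lemma \ref{cauchy}) to transfer the resulting eigenvalue estimate from $H$ to $G$, I would conclude $\lambda_4(G)\geq \rho_{r,k}$, contradicting the hypothesis.

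The hard part will be this spectral step in the critical subcase where $r/(k-2)\in\mathbb{Z}$, since the edge count then guarantees only three clique-like components and no immediate candidate for a fourth large eigenvalue of $G$. I expect to resolve this by exploiting the induced structure on $S$ itself, or by choosing a finer partition that mimics the join structure of $H_{r,k}$ so that a fourth eigenvalue at least $\rho_{r,k}$ emerges from the quotient matrix rather than from disjoint components. Handling the three parity cases of Lemma \ref{lem_spec1} separately, and verifying the correct direction of the inequality $\rho(B_\pi)\geq \rho_{r,k}$ in each, will require careful bookkeeping of block sizes and boundary edges.
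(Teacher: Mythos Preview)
Your overall strategy---contradiction through Win's theorem, an edge-counting argument exploiting $r$-regularity, and a spectral comparison---matches the paper's. However, your edge count is weaker than it needs to be, and this is what creates the ``hard part'' you anticipate; in fact that hard part does not exist.

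The key observation you are missing is that every component $C_i$ of $G-S$ satisfies $e(C_i,S)\geq 1$, since $G$ is connected. With this, suppose at most three components had $e(C_i,S)<m=\lceil r/(k-2)\rceil$. Then
\[
r|S|\;\geq\;\sum_{i=1}^{c} e(C_i,S)\;\geq\; m(c-3)+3\;\geq\; m(k-2)|S|+3\;\geq\; r|S|+3,
\]
a contradiction. So there are always at least \emph{four} components $G_1,G_2,G_3,G_4$ with small boundary, regardless of whether $(k-2)\mid r$. Once you have four disjoint induced subgraphs, interlacing gives $\lambda_4(G)\geq \lambda_4(G_1\cup G_2\cup G_3\cup G_4)\geq \min_{1\leq i\leq 4}\lambda_1(G_i)$ directly; there is no need to bring vertices of $S$ into the picture or to build a single assembled subgraph $H$.

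Your spectral step is therefore also more complicated than necessary. The paper does not construct a joint quotient matrix out of several components and $S$; it shows that \emph{each} small-boundary component $G_i$ individually satisfies $\lambda_1(G_i)\geq \rho_{r,k}$. The argument is: $e(G_i,S)<m$ forces $|V(G_i)|\geq r+1$; if $|V(G_i)|\geq r+2$ one uses the average-degree bound $\lambda_1(G_i)\geq 2|E(G_i)|/|V(G_i)|$ to beat $\rho_{r,k}$; if $|V(G_i)|=r+1$ (or $r+2$ in the odd case with $m=2$) one partitions $V(G_i)$ into full-degree and deficient vertices and checks that the resulting $2\times 2$ (or $3\times 3$) quotient matrix has largest eigenvalue at least $\rho_{r,k}$. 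You should also note the special subcase $\lceil r/(k-2)\rceil=2$ with $r$ even, where a parity argument shows $G$ has a $k$-tree outright and no spectral estimate is needed.
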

	\begin{proof}
		Assume to the contrary that $G$ has no $k$-tree. From Theorem \ref{ktree}, there exists a vertex set $S\subseteq V(G)$ such that $c(G-S)\geq (k-2)|S|+3$. Let $G_1,G_2,\dots,G_c$ be the components of $G-S$, where $c=c(G-S)$.
		
		\begin{claim}\label{claim1}
			There are at least four components, write $G_1,G_2,G_3,G_4 $, such that $e_{G}(V(G_i),S)<\lceil \frac{r}{k-2} \rceil$ for all $1\leq i\leq 4$.
		\end{claim}
		\begin{proof}[Proof of Claim \ref{claim1}]
			Assume to the contrary that there are at most three such components in $G-S$. Note that $G$ is $r$-regular. Then 
			\begin{equation*}
				\begin{split}
					r|S|&\geq\sum_{i=1}^{c}e_{G}(V(G_i),S)\\
					&\geq \lceil \frac{r}{k-2} \rceil (c-3)+3\\
					&\geq \lceil \frac{r}{k-2} \rceil \cdot (k-2) \cdot |S|+3\\
					&\geq r|S|+3,
				\end{split}
			\end{equation*}
		which is a contradiction. This completes the proof of Claim 1.
		\end{proof}
	By Lemma \ref{radius}, it follows that
	\begin{equation*}
		\lambda_{4}(G)\geq\lambda_{4}(G_1\cup G_2\cup G_3\cup G_4)\geq \min_{i\in\{1,2,3,4\}}\lambda_1(G_i).
	\end{equation*}

	\begin{claim}
		For an $r$-regular graph $G$, if $\lceil\frac{r}{k-2}\rceil=2$ and $r$ is even, then $G$ must have a $k$-tree.
	\end{claim}
	\begin{proof}[Proof of Claim 2]
		Since $G$ is an $r$-regular graph, we get $2|E(G_i)|=r|V(G_i)|-e_{G}(V(G_i),S)$ for any $1\leq i\leq c$.  Note that $r$ is even. Then $e_{G}(V(G_i),S)\geq2$ is even.  Recall that, there exists a vertex set $S$ such that $c(G-S)\geq(k-2)|S|+3$, it follows that
		\begin{equation*}
			\begin{split}
				r|S|&=2|E(G[S])|+\sum_{i=1}^{c}e_{G}(V(G_i),S)\\
				&\geq 2|E(G[S])|+2c\\
				&=2|E(G[S])|+\lceil\frac{r}{k-2}\rceil(c-3)+6\\
				&\geq 2|E(G[S])|+\lceil\frac{r}{k-2}\rceil(k-2)|S|+6\\
				&\geq2|E(G[S])|+r|S|+6,
			\end{split}
		\end{equation*}
	a contradiction. Hence $c(G-S)\leq(k-2)|S|+2$ for all $S\subseteq V(G)$, which implies that $G$ has a $k$-tree. This completes the proof of Claim 2.
	\end{proof}
	\begin{claim}\label{claim3}
		Let $\mathcal{H}$ be the set containing all components of $G-S$ satisfying $e_{G}(V(H),S)<\lceil\frac{r}{k-2}\rceil$, where $H\in \mathcal{H}$. Let
		\begin{equation*}
			\alpha=\begin{cases}
				\lceil \frac{r}{k-2} \rceil-2, & \mbox{if } \lceil \frac{r}{k-2} \rceil\geq3 \mbox{ is even},\\
				\lceil \frac{r}{k-2} \rceil-1, & \mbox{if } \lceil \frac{r}{k-2} \rceil\geq3 \mbox{ is odd}.
			\end{cases}
		\end{equation*}
		Then the following holds.
		\begin{itemize}
			\item [\textnormal{(\romannumeral1)}] For $\lceil \frac{r}{k-2} \rceil=2$ and $r$ is odd or $\lceil \frac{r}{k-2} \rceil\geq3$ , we have $\lambda_{1}(H)\geq\rho_{r,k}$.
			\item [\textnormal{(\romannumeral2)}] If $H_0\in\mathcal{H}$ and $\lambda_{1}(H_0)\leq\lambda_{1}(H)$ for any $H\in\mathcal{H}$, we have
		 \begin{equation*}
		 	|V(H_0)|=\begin{cases}
		 		r+1, & \mbox{if }	\lceil \frac{r}{k-2} \rceil\geq3,\\
		 		r+2, & \mbox{if } \lceil \frac{r}{k-2} \rceil=2  \mbox{ and } r \mbox{ is odd},
		 	\end{cases}
		 \end{equation*}
		 and
		 \begin{equation*}
		 \begin{cases}
		 			2|E(H_0)|\geq r(r+1)-\alpha, & \mbox{if }	\lceil \frac{r}{k-2} \rceil\geq3,\\
		 		    2|E(H_0)|=r(r+2)-1, & \mbox{if } \lceil \frac{r}{k-2} \rceil=2  \mbox{ and } r \mbox{ is odd}.
		 	\end{cases}
		 \end{equation*}
		\end{itemize}   
	\end{claim}
	\begin{proof}[Proof of Claim 3]
		Since $e_{G}(V(H),S)<\lceil \frac{r}{k-2} \rceil< r$ and $G$ is an $r$-regular graph, we have $|V(H)|\geq r+1$.  We distinguish it into two situations for consideration.
		
		\textbf{Case 1.} $\lceil \frac{r}{k-2} \rceil\geq3$.
		
		First, we suppose that $|V(H)|\geq r+2$. As is
		well known, we have $\lambda_{1}(H)\geq\frac{2|E(H)|}{|V(H)|}$ (see \cite{brou2}, for example). One can check that  
		\begin{equation*}
			\lambda_{1}(H)> \frac{r|V(H)|-\alpha}{|V(H)|}\geq r-\frac{\alpha}{r+2}.
		\end{equation*}
	
	Combining Lemma \ref{lem_spec1}, we have
	\begin{align*}
			r-\frac{\alpha}{r+2}>\rho_{r,k}&\Leftrightarrow	r-\frac{\alpha}{r+2}>\frac{r}{2}+\frac{\sqrt{r^2+4r-4\alpha+4}}{2}-1\\
			&\Leftrightarrow \frac{r+2}{2}>\frac{\sqrt{r^2+4r-4\alpha+4}}{2}+\frac{\alpha}{r+2}\\
			&\Leftrightarrow (r+2)^2-2\alpha>(r+2)\sqrt{r^2+4r-4\alpha+4}\\
			&\Leftrightarrow4\alpha^2>0.
	\end{align*}
	It is easy to see that $\lambda_1(H)>r-\frac{\alpha}{r+2}>\rho_{r,k}$ always holds for $\lceil \frac{r}{k-2} \rceil\geq3$ and $|V(H)|\geq r+2$. Recall that $\lambda_{1}(H_0)\leq\lambda_{1}(H)$ for any $H\in\mathcal{H}$ and $|V(H_{r,k})|=r+1$. Then $|V(H_0)|=r+1$. Clearly, $e_{G}(V(H_0),S)<\lceil \frac{r}{k-2} \rceil$ is even. We get $e_{G}(V(H_0),S)\leq \alpha$ and $2|E(H_0)|\geq r(r+1)-\alpha$. 

%
%
	
	Let $\beta\leq\alpha$ be an integer and $2|E(H_0)|=r(r+1)-\beta \geq r(r+1)-\alpha$. There are at least $r+1-\alpha$ vertices with degree $r$ in $H_0$ (Otherwise, $(r-\alpha)r+(r-1)(\alpha-1)=r(r+1)-\alpha-1<2|E(H_0)|$, a contradiction).	Let $V_1$ be a set of vertices with degree $r$ such that $|V_1| = r+1-\alpha$, and let $V_2$ be the remaining vertices in $V(H_0)$. The quotient matrix of the vertex partitions $\{V_1,V_2\}$ of $H_0$ is
	\begin{equation*}
		B_4=\begin{bmatrix}
			r-\alpha & \alpha \\
			r+1-\alpha & \alpha-1-\frac{\beta}{\alpha}
		\end{bmatrix}.
	\end{equation*}
	Let
	\begin{equation*}
		B'_4=\begin{bmatrix}
			r-\alpha & \alpha \\
			r+1-\alpha & \alpha-2
		\end{bmatrix}.
	\end{equation*}
	By Lemma \ref{nonnegative}, we have $\lambda_{1}(B_4)\geq\lambda_{1}(B'_4)$. By calculation, we get the largest eigenvalue of $B'_4$ is $\frac{r}{2}+\frac{\sqrt{r^2+4r-4\alpha+4}}{2}-1=\rho_{r,k}$.
	Combining with Lemma \ref{equradius}, we have $\lambda_{1}(H_0)\geq\lambda_{1}(B_4)\geq\lambda_{1}(B'_4)=\rho_{r,k}$ and hence $\lambda_{1}(H)\geq\rho_{r,k}$ for any $H\in\mathcal{H}$.
	
	\textbf{Case 2.} $\lceil \frac{r}{k-2} \rceil=2$ and $r$ is odd.
	
	Since $e_{G}(V(H),S)<\lceil \frac{r}{k-2} \rceil=2$ and $G$ is connected, we have $e_{G}(V(H),S)=1$. Notice that $2|E(H)|=r|V(H)|-e_{G}(V(H),S)$. Then $|V(H)|\geq r+2$ is an odd integer.
	If $|V(H)|\geq r+4$, then 
	\begin{equation*}
		\lambda_{1}(H)> \frac{r|V(H)|-\alpha}{|V(H)|}\geq r-\frac{1}{r+4}.
	\end{equation*}
	
	A straightforward calculation shows that $\psi'(x)=3x^2+2(2-r)x-2r$ and $\psi''(x)=2(3x-r+2)$.
	By plugging the value $r-\frac{1}{r+4}$ into $\psi(x)$, $\psi'(x)$ and $\psi''(x)$ yields that $\psi(r-\frac{1}{r+4})=\frac{r^3+6r^2-6r-57}{(r+4)^3}>0$ and $\psi'(r-\frac{1}{r+4})=\frac{r^4 + 10r^3 + 28r^2 + 12r - 13}{(r+4)^2}>0$ and $\psi''(r-\frac{1}{r+4})=\frac{4r^2+20r+10}{r+4}>0$ for $r\geq3$. Therefore, $\lambda_{1}(H)>r-\frac{1}{r+4}>\theta(r)$ holds for $|V(H)|\geq r+4$. Then we get $|V(H_0)|=r+2$ and $2|E(H_0)|=r(r+2)-1$. There exists $r+1$ vertices of degree $r$ in $H_0$ and one vertex of degree $r-1$ in $H_0$. One can see that $H_0\cong \left( K_1\cup K_2\right)\vee \overline{\frac{r-1}{2}K_2}$. The corresponding quotient matrix of $A\left( \left( K_1\cup K_2\right)\vee \overline{\frac{r-1}{2}K_2} \right) $ with the partition $\left\lbrace V(K_1), V(K_2), V\left( \overline{\frac{r-1}{2}K_2}\right)  \right\rbrace$ is
	\begin{equation*}
		B_5=\begin{bmatrix}
			0 & 0 & r-1 \\
			0 & 1 & r-1\\
			1 & 2 & r-3
		\end{bmatrix}.
	\end{equation*}
	The characteristic polynomial of $B_5$ is $\psi(x)=x^3 + (2 - r)x^2 - 2rx + r-1$ and the largest root of $\psi(x)=0$ is $\theta(r)=\rho_{r,k}$. Note that the partition is equitable, one can see that $\lambda_{1}(H_0)=\rho_{r,k}$, and $\lambda_{1}(H)\geq\rho_{r,k}$ for any $H\in\mathcal{H}$.
	
	This completes the proof of Claim 3.
	\end{proof}

	All in all, if $\lambda_{4}(G)<\rho_{r,k}$, we have $c(G-S)\leq (k-2)|S|+2$, implying $G$ has a $k$-tree. This completes the proof.
	\end{proof}
	
	\begin{remark}
		The condition that $n$ is sufficiently larger than $r$ implies that the graph $G$ is “sparse”. According to Claim \ref{claim1} and \ref{claim3}, if $G$ contains no $k$-trees, then there exists a vertex set $S\subseteq V(G)$ such that $c(G-S)\geq (k-2)|S|+3$, and furthermore, there are at least four components $G_i$ $(1\leq i\leq s)$ of $G-S$ with $|V(G_i)|\geq r+1$. This implies that $n$ at least $4(r+1)+|S|$. When $r$ is closer to $n$, indicating a denser graph $G$, the likelihood of $G$ containing a $k$-tree increases.	Moreover, the bound on $\lambda_4(G)$ given in Theorem \ref{thmktree} is not tight as Theorem \ref{ktree} does not present necessary and sufficient condition.
	\end{remark}

	\setcounter{claim}{0}

	\section{The relationship between the spectral radius and spanning trees with leaf degree at most $k$ of $t$-connected graphs}
	
	In this section, we present a sufficient spectral radius condition for the existence of a spanning tree with leaf degree at most $k$ in an $n$-vertex $t$-connected graph $G$. Let $i=i(G-S)$ and $s=|S|$. Firstly, we assume that $G$ has no spanning trees with leaf degree at most $k$, and then draw the following lemmas.
	
	\begin{lemma}\label{lem4_1} Let $G$ be an $n$-vertex graph without a spanning tree with leaf degree at most $k$. Then $\lambda_{1}(G)\leq \lambda_{1}\left( K_s\vee \left( K_{n-(k+2)s}\cup(k+1)sK_1\right)\right) $ with equality holds if and only if $G\cong K_s\vee \left( K_{n-(k+2)s}\cup(k+1)sK_1\right)$, where $1\leq s\leq \lfloor\frac{n}{k+2}\rfloor$.
	\end{lemma}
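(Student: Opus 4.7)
The plan is to invoke Kaneko's theorem (Theorem \ref{leafk}) to extract from the hypothesis a nonempty set $S \subseteq V(G)$ with $i(G-S) \geq (k+1)|S|$, then realize $G$ as a spanning subgraph of an ``extremal'' graph parameterized by the pair $(s,i) = (|S|, i(G-S))$, and finally push the parameter $i$ down to the boundary case $i = (k+1)s$, which produces precisely the graph $K_s \vee (K_{n-(k+2)s} \cup (k+1)sK_1)$ named in the statement.

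First I would set up notation and verify the range of $s$. Since $i \geq (k+1)s \geq 2$, the bound $s + i \leq n$ forces $(k+2)s \leq n$, giving $1 \leq s \leq \lfloor n/(k+2) \rfloor$ as claimed. Partition $V(G) = S \cup I \cup A$, where $I$ is the set of isolated vertices of $G-S$ (so $|I| = i$) and $A$ collects the remaining $n-s-i$ vertices. By the very definition of $I$, there are no $G$-edges inside $I$ and no $G$-edges between $A$ and $I$; every other pair may or may not be adjacent in $G$. Comparing against the graph obtained by making $S$ and $A$ each into a clique and joining $S$ completely to $A \cup I$, we see that $G$ sits as a spanning subgraph of $K_s \vee (K_{n-s-i} \cup iK_1)$, which yields
\[
\lambda_1(G) \leq \lambda_1\!\left(K_s \vee (K_{n-s-i} \cup iK_1)\right).
\]

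Next I would record a monotonicity step in the parameter $i$: pulling one vertex out of the independent part and inserting it into the clique only adds edges (it becomes adjacent to the remaining $n-s-i-1$ clique vertices), so $K_s \vee (K_{n-s-i} \cup iK_1)$ is a spanning subgraph of $K_s \vee (K_{n-s-i+1} \cup (i-1)K_1)$. Iterating this $i - (k+1)s$ times and using the standard entrywise-domination principle for spectral radii of nonnegative matrices (the spanning-subgraph analogue of Lemma \ref{radius}), we obtain
\[
\lambda_1\!\left(K_s \vee (K_{n-s-i} \cup iK_1)\right) \leq \lambda_1\!\left(K_s \vee (K_{n-(k+2)s} \cup (k+1)sK_1)\right),
\]
and chaining with the previous bound closes the proof.

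The only point requiring care is the bookkeeping behind the two spanning-subgraph inclusions, namely checking edge-by-edge across each edge type between $S$, $A$, and $I$ that the smaller graph really embeds into the larger. No substantive spectral calculation arises at this stage; the sharp numerical bound on $\lambda_1(K_s \vee (K_{n-(k+2)s} \cup (k+1)sK_1))$ and the accompanying equality characterization will instead be handled by the main theorem that follows.
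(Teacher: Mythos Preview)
Your proposal is correct and follows essentially the same route as the paper's proof: invoke Theorem~\ref{leafk} to obtain $S$ with $i(G-S)\geq(k+1)|S|$, then embed $G$ as a spanning subgraph of $K_s\vee(K_{n-(k+2)s}\cup(k+1)sK_1)$ and apply the monotonicity of $\lambda_1$. The paper compresses your two-step embedding (first to $K_s\vee(K_{n-s-i}\cup iK_1)$, then shrinking $i$ down to $(k+1)s$) into a single ``Obviously'', and it cites Lemma~\ref{radius} (which as stated is for induced subgraphs) where you more carefully invoke the entrywise-domination principle for spanning subgraphs; your version is the more precise of the two.
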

	\begin{proof}
		Since $G$ contains no spanning trees with leaf degree at most $k$, it follows from Theorem \ref{leafk} that there exists a vertex set $S\subseteq V(G)$ such that $i(G-S)\geq(k+1)|S|$, where $i(G-S)$ is the number of isolated vertices of $G-S$. Obviously, $G$ is a spanning subgraph of $K_s\vee \left( K_{n-(k+2)s}\cup(k+1)sK_1\right)$. By Lemma \ref{radius}, we get $\lambda_{1}(G)\leq \lambda_{1}\left( K_s\vee \left( K_{n-(k+2)s}\cup(k+1)sK_1\right)\right)$ with equality holds if and only if $G\cong K_s\vee \left( K_{n-(k+2)s}\cup(k+1)sK_1\right)$. This completes the proof.
	\end{proof}

	\setcounter{claim}{0}
	\begin{lemma}\label{lemma4_2}
		For $t\leq s\leq \lfloor\frac{n}{k+2}\rfloor$, 
		\begin{equation*}
			\begin{split}
				&\lambda_{1}(K_s\vee \left( K_{n-(k+2)s}\cup(k+1)sK_1\right))\\
				&\leq\max\left\lbrace \lambda_{1}(K_t\vee \left( K_{n-(k+2)t}\cup(k+1)tK_1\right)),\lambda_{1}\left( K_{\lfloor\frac{n}{k+2}\rfloor}\vee \left( K_{n-(k+2)\lfloor\frac{n}{k+2}\rfloor}\cup(k+1)\lfloor\frac{n}{k+2}\rfloor K_1\right)\right) \right\rbrace.
			\end{split}
		\end{equation*}
		  Equality holds if and only if $s=t$ or $s=\lfloor\frac{n}{k+2}\rfloor$.
	\end{lemma}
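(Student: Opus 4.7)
The plan is to pass to the equitable partition of $G_s:=K_s\vee(K_{n-(k+2)s}\cup(k+1)sK_1)$ into its three natural cells, reduce the spectral radius to the Perron root of a $3\times 3$ matrix, and then analyze the characteristic polynomial as a cubic in $s$. The key observation will be that this cubic, evaluated at $x=\max\{f(t),f(\lfloor n/(k+2)\rfloor)\}$, is strictly concave in $s$ on the range $[t,\lfloor n/(k+2)\rfloor]$, which forces the Perron root to be no larger in the interior than at the endpoints.

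First I would apply Lemma \ref{equradius} to the partition $\pi=\{V(K_s),V(K_{n-(k+2)s}),V((k+1)sK_1)\}$, which is easily seen to be equitable, to obtain $f(s):=\lambda_1(G_s)=\rho(B(s))$ where
\[
B(s)=\begin{pmatrix}s-1 & n-(k+2)s & (k+1)s \\ s & n-(k+2)s-1 & 0 \\ s & 0 & 0\end{pmatrix}.
\]
Expanding $\det(xI-B(s))$ and collecting powers of $s$ yields
\[
\phi_s(x)=x(x+1)(x-n+1)+(k+1)x(x+1)\,s-(k+1)(x-n+1)\,s^{2}-(k+1)(k+2)\,s^{3},
\]
a cubic in $s$ with negative leading coefficient.

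Next, set $M:=\max\{f(t),f(\lfloor n/(k+2)\rfloor)\}$ and $g(s):=\phi_s(M)$. Since $\phi_{s^*}$ is a monic cubic in $x$ with largest root $f(s^*)\le M$ for $s^*\in\{t,\lfloor n/(k+2)\rfloor\}$, one has $g(t),g(\lfloor n/(k+2)\rfloor)\ge 0$. A direct computation gives $g''(s)=-6(k+1)(k+2)s+2(k+1)(n-1-M)$, so $g''(s)\le 0$ iff $s\ge (n-1-M)/(3(k+2))$. To verify this bound on $[t,\lfloor n/(k+2)\rfloor]$, I would apply Lemma \ref{radius} to the induced subgraph $K_{n-(k+1)t}$ of $K_t\vee(K_{n-(k+2)t}\cup(k+1)tK_1)$, obtaining $M\ge f(t)\ge n-(k+1)t-1$; then $(n-1-M)/(3(k+2))\le (k+1)t/(3(k+2))<t$, so $g$ is strictly concave on the interval and $g(s)>0$ throughout its interior.

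To finish I would convert $g(s)\ge 0$ into the desired bound $f(s)\le M$. This needs the two smaller eigenvalues of $B(s)$ (equivalently the two smaller real roots of $\phi_s$) to lie strictly below $M$, which I would establish by symmetrizing $B(s)$ as $D^{1/2}B(s)D^{-1/2}$ with $D=\mathrm{diag}(s,n-(k+2)s,(k+1)s)$ and applying Cauchy's interlacing (Lemma \ref{cauchy}) to a suitable $2\times 2$ principal submatrix, yielding an upper bound of the form $n-(k+2)s-1<M$ on the middle eigenvalue of $B(s)$. The main obstacle is this final interlacing step, which is the only thing preventing concavity of $g$ from translating directly into $f(s)\le M$; once secured, strict concavity in the interior delivers the stated equality characterization.
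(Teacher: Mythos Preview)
Your approach is correct and genuinely different from the paper's, and considerably cleaner. Both arguments start from the same equitable $3\times 3$ quotient matrix and the same characteristic polynomial (your expansion $\phi_s(x)=x(x+1)(x-n+1)+(k+1)x(x+1)s-(k+1)(x-n+1)s^2-(k+1)(k+2)s^3$ agrees with the paper's $f(x)$ after collecting terms). From there the paper proceeds via the implicit function theorem: it spends Claim~1 proving $\partial\Phi/\partial x>0$ at the Perron root through a lengthy analysis of $\Phi(x_{1,1},s)$ as a function of $n$, and Claim~2 locating the sign change of $\partial\Phi/\partial s$ at the Perron root by studying a degree-four polynomial $h_3(n)$, to conclude that $\rho_f(s)$ is first decreasing then increasing. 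Your route bypasses all of this: fixing $M$ and observing that $g(s)=\phi_s(M)$ is a cubic in $s$ with $g''(s)=2(k+1)(n-1-M)-6(k+1)(k+2)s$, the single estimate $M\ge f(t)>n-(k+1)t-1$ (from the induced clique $K_{n-(k+1)t}\subset G_t$ and Lemma~\ref{radius}) immediately gives $g''(s)<0$ on $[t,\lfloor n/(k+2)\rfloor]$, so $g$ is strictly concave with nonnegative endpoint values.

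Your ``main obstacle'' is not one: deleting the first row and column of the symmetrized quotient matrix $D^{1/2}B(s)D^{-1/2}$ leaves $\mathrm{diag}(n-(k+2)s-1,\,0)$ (the off-diagonal entry vanishes because $B_{23}=B_{32}=0$), so Lemma~\ref{cauchy} gives $\eta_2(s)\le n-(k+2)s-1$. This is exactly the bound $\xi_f\le n-(k+2)s-1$ that the paper itself invokes inside Claim~2. Since $(k+2)s\ge(k+2)t>(k+1)t>n-1-M$ for every $s\ge t$, you get $\eta_2(s)<M$; combined with $\phi_s(M)>0$ in the interior this forces $f(s)<M$ there, and the equality characterization follows. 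What the paper's heavier analysis buys is slightly more: it shows $\rho_f(s)$ is actually unimodal in $s$, whereas your argument only shows it is bounded by the endpoint maximum. For the lemma as stated, your method suffices and is far more economical.
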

	\begin{proof}
		Clearly, the equitable quotient matrix of $A\left( K_s\vee \left( K_{n-(k+2)s}\cup(k+1)sK_1\right)\right)$ with partition $\left\lbrace V(K_s), V(K_{n-(k+2)s}), V((k+1)sK_1)\right\rbrace $ is equal to
		\begin{equation*}
			\begin{bmatrix}
				s-1 & n-(k+2)s & (k+1)s \\
				s & n-(k+2)s-1 & 0 \\
				s & 0 & 0
			\end{bmatrix}.
		\end{equation*}
	By calculation, we get the corresponding characteristic polynomial is
	\begin{equation*}
		f(x)=x^3+(s-n+ks+2)x^2+(s-n+ks-ks^2-s^2+1)x+ns^2-3ks^3-ks^2-s^2-2s^3-k^2s^3+kns^2.
	\end{equation*}
	The largest root of equation $f(x)=0$ is denoted by $\rho_f$. It follows from Lemma \ref{equradius} that $\lambda_{1}\left( K_s\vee \left( K_{n-(k+2)s}\cup(k+1)sK_1\right)\right)=\rho_f$. In the following, it suffices to show that
	\begin{equation*}
		\begin{split}
			&\lambda_{1}(K_s\vee \left( K_{n-(k+2)s}\cup(k+1)sK_1\right))\\
			&<\max\left\lbrace \lambda_{1}(K_t\vee \left( K_{n-(k+2)t}\cup(k+1)tK_1\right)),\lambda_{1}\left( K_{\lfloor\frac{n}{k+2}\rfloor}\vee \left( K_{n-(k+2)\lfloor\frac{n}{k+2}\rfloor}\cup(k+1)\lfloor\frac{n}{k+2}\rfloor K_1\right)\right) \right\rbrace
		\end{split}
	\end{equation*}
	for $t+1<s<\lfloor\frac{n}{k+2}\rfloor-1$. First, we consider the monotonicity of function $\rho_f=\rho_f(s)$ with respect to $s$. Define a function $\Phi(x,s)$ with two variables in the domain $D=\{(x,s)|x=\rho_f,1\leq s\leq \lfloor\frac{n}{k+2}\rfloor\}$ as follows:
	\begin{equation*}
		\Phi(x,s)=x^3+(s-n+ks+2)x^2+(s-n+ks-ks^2-s^2+1)x+ns^2-3ks^3-ks^2-s^2-2s^3-k^2s^3+kns^2.
	\end{equation*}
	
	We calculate that
	\begin{equation*}
		\frac{\partial \Phi(x,s)}{\partial x}=3x^2+(2s-2n+2ks+4)x+s-n+ks-ks^2-s^2+1\triangleq g_1(x).
	\end{equation*}
	Solve $g_1(x)=0$ yields that
	\begin{equation*}
		\begin{split}
			x_{1,1}&=\frac{n-s-ks-2}{3}+\frac{\sqrt{n^2-(2ks+2s+1)n+k^2s^2+5ks^2+ks+4s^2+s+1}}{3},\\
			x_{1,2}&=\frac{n-s-ks-2}{3}-\frac{\sqrt{n^2-(2ks+2s+1)n+k^2s^2+5ks^2+ks+4s^2+s+1}}{3}.
		\end{split}
	\end{equation*}
	Since $n\geq (k+2)s> ks+s+\frac{1}{2}$, we have $n^2-(2ks+2s+1)n+k^2s^2+5ks^2+ks+4s^2+s+1\geq(3k+4)s^2-s+1>0$ for $k\geq1$ and $s\geq1$. Hence $x_{1,1}>0$ is a real number. Moreover, one can check that $x_{1,2}<0$ for $n\geq(k+2)s$, $k\geq1$ and $s\geq1$. Then we calculate that
	\begin{equation*}
			\frac{\partial \Phi(x,s)}{\partial s}=(k+1)[x^2-(2s-1)x+(2ns-2s-3ks^2-6s^2)]\triangleq g_2(x).
	\end{equation*}	
	Solve $g_2(x)=0$ yields that
	\begin{equation*}
		\begin{split}
			x_{2,1}&=\frac{2s-1}{2}+\frac{\sqrt{4s-8ns+12ks^2+28s^2+1}}{2},\\
			x_{2,2}&=\frac{2s-1}{2}-\frac{\sqrt{4s-8ns+12ks^2+28s^2+1}}{2}.
		\end{split}
	\end{equation*}
	If $n>(\frac{3}{2}k+\frac{7}{2})s+\frac{1}{2}+\frac{1}{8s}$, the equation $g_2(x)=0$ has no real roots, which implies that $\frac{\partial \Phi(x,s)}{\partial s}>0$. For $n<\left( \frac{3}{2}k+3\right)s+1 $, we calculate that $x_{2,2}<0$.

	\begin{claim}
		$g_1(\rho_f)=3(\rho_f)^2+(2s-2n+2ks+4)\rho_f+s-n+ks-ks^2-s^2+1>0$ for $n\geq (k+2)s$.
	\end{claim}
	\begin{proof}[Proof of Claim 1]
		By plugging the value $x_{1,1}$ into $x$ of $\Phi(x,s)$, we get
		\begin{align*}
				\Phi(x_{1,1},s)&=\frac{1}{9}\left[ (2s+2ks+1)n^2-\frac{2}{3}n^3+(-2k^2s^2+2ks^2-2ks+4s^2-2s+1)n\right.\\ &\left.+\left(\frac{2k^3}{3}-4k^2-19k-\frac{43}{3}\right)s^3+(k^2-k-2)s^2+(-k-1)s-\frac{2}{3}\right]\\
				&+\frac{1}{9}\left[ \left( \frac{4s}{3}+\frac{4ks}{3}+\frac{2}{3}\right)n-\frac{2}{3}n^2-\frac{2k^2s^2}{3}-\frac{10ks^2}{3}-\frac{2ks}{3}-\frac{8s^2}{3}-\frac{2s}{3}-\frac{2}{3} \right]\\
				&\cdot \sqrt{n^2-(2ks+2s+1)n+k^2s^2+5ks^2+ks+4s^2+s+1}\\
				&\triangleq h_1(n).
		\end{align*}
	
	Taking the derivative of $h_1(n)$, we get
	\begin{align*}
			h'_1(n)&=\frac{1}{9\sqrt{n^2-(2ks+2s+1)n+k^2s^2+5ks^2+ks+4s^2+s+1}}\\
			&\cdot\left\lbrace \left[ -2n^3+(6ks+6s+3)n^2+(-6k^2s^2-18ks^2-6ks-12s^2-6s-3)n+2k^3s^3+12k^2s^3\right.\right.
			\\ &\left.\left.+3k^2s^2+18ks^3+9ks^2+3ks+8s^3+6s^2+3s+1 \right]+\left[ -2n^2+(4s+4ks+2)n-2k^2s^2\right.\right.\\
			&\left.\left.+2ks^2-2ks+4s^2-2s+1\right]\cdot \sqrt{n^2-(2ks+2s+1)n+k^2s^2+5ks^2+ks+4s^2+s+1} \right\rbrace.
	\end{align*}
	
	Let $\phi_1(n)=-2n^3+(6ks+6s+3)n^2+(-6k^2s^2-18ks^2-6ks-12s^2-6s-3)n+2k^3s^3+12k^2s^3+3k^2s^2+18ks^3+9ks^2+3ks+8s^3+6s^2+3s+1$ and $\phi_2(n)=-2n^2+(4s+4ks+2)n-2k^2s^2+2ks^2-2ks+4s^2-2s+1$. Thus,
	\begin{equation*}
		h'_1(n)=\frac{\phi_1(n)}{9\sqrt{n^2-(2ks+2s+1)n+k^2s^2+5ks^2+ks+4s^2+s+1}}+\frac{\phi_2(n)}{9}.
	\end{equation*}
	
	Taking the derivative of $\phi_1(n)$, we get
	\begin{equation*}
		\phi'_1(n)=-6n^2+(12ks+12s+6)n-(6k^2+18k+12)s^2-(6k+6)s-3.
	\end{equation*}
	It is easy to see that $\phi_1'(n)\leq \phi'_1(ks+s+\frac{1}{2})=-6ks^2-6s^2-\frac{3}{2}<0$. Since $n\geq (k+2)s$, we have $\phi_1(n)\leq\phi_1((k+2)s)=(-6k-8)s^3+(3k+6)s^2-3s+1<0$ for $k\geq1$ and $s\geq1$. Furthermore, we obtain that $\phi_2(n)\geq0$ for $n\in\left[(k+2)s,ks+s+\frac{\sqrt{3(4ks^2+4s^2+1)}}{2}+\frac{1}{2}\right]$ and $\phi_2(n)<0$ for $n>ks+s+\frac{\sqrt{3(4ks^2+4s^2+1)}}{2}+\frac{1}{2}$. Let $n\in\left[(k+2)s,ks+s+\frac{\sqrt{3(4ks^2+4s^2+1)}}{2}+\frac{1}{2}\right]$, we consider the following inequality:	
	\begin{align}
			&h'_1(n)=\frac{\phi_1(n)}{9\sqrt{n^2-(2ks+2s+1)n+k^2s^2+5ks^2+ks+4s^2+s+1}}+\frac{\phi_2(n)}{9}\geq0\nonumber\\
			&\Leftrightarrow \phi_1(n)+\phi_2(n)\sqrt{n^2-(2ks+2s+1)n+k^2s^2+5ks^2+ks+4s^2+s+1}\geq0\nonumber\\
			&\Leftrightarrow \phi_2(n)\sqrt{n^2-(2ks+2s+1)n+k^2s^2+5ks^2+ks+4s^2+s+1}\geq-\phi_1(n)\nonumber\\
			&\Leftrightarrow -9\cdot(4ks^2+4s^2+1)\cdot\left[ n^2-(2ks+2s+1)n+k^2s^2+5ks^2+ks+4s^2+s+1\right]\nonumber\\
			&\cdot \left[ n-\left( s+ks+\frac{1}{2}-\frac{\sqrt{4ks^2+4s^2+1}}{2}\right) \right]\cdot \left[ n-\left( s+ks+\frac{1}{2}+\frac{\sqrt{4ks^2+4s^2+1}}{2}\right) \right]\geq 0\nonumber\\
			&\Leftrightarrow \left[ n-\left( s+ks+\frac{1}{2}-\frac{\sqrt{4ks^2+4s^2+1}}{2}\right) \right]\cdot \left[ n-\left( s+ks+\frac{1}{2}+\frac{\sqrt{4ks^2+4s^2+1}}{2}\right) \right]\leq 0.\label{eq4.1}
	\end{align}

	Combining with \eqref{eq4.1}, we get
	\begin{equation*}
		\begin{cases}
		h'_1(n)\geq 0, & \mbox{if } n\in\left[(k+2)s, s+ks+\frac{1}{2}+\frac{\sqrt{4ks^2+4s^2+1}}{2}\right],\\
		h'_1(n)< 0, & \mbox{if } n> s+ks+\frac{1}{2}+\frac{\sqrt{4ks^2+4s^2+1}}{2}.
		\end{cases}
	\end{equation*}

	Therefore,
	\begin{equation*}
		\Phi(x_{1,1},s)\leq 	h_1\left( s+ks+\frac{1}{2}+\frac{\sqrt{4ks^2+4s^2+1}}{2}\right) =-9ks^3-9s^3<0,
	\end{equation*}
	which implies that $x_{1,1}<\rho_f$ and $\frac{\partial \Phi(x,s)}{\partial x}>0$ for $x=\rho_f$ and $n\geq (k+2)s$. This completes the proof of Claim 1.
%
%
	\end{proof}

	\begin{claim}
		For $(k+2)s\leq n\leq (\frac{3}{2}k+\frac{7}{2})s+\frac{1}{2}+\frac{1}{8s}$, there exists a real number $n_0\in\left[(k+\frac{11}{4})s+\frac{1}{2},\right.\\ \left. ks+2s+\frac{\sqrt{4ks^2+8s^2+1}}{2}+\frac{1}{2} \right] $ such that
		\begin{equation*}
			\begin{cases}
				g_2(x)<0, & \mbox{if } x=\rho_f \mbox{ and } n\in \left[(k+2)s, n_0 \right), \\
				g_2(x)\geq0, & \mbox{if } x=\rho_f \mbox{ and } n\in \left[n_0,  (\frac{3}{2}k+\frac{7}{2})s+\frac{1}{2}+\frac{1}{8s}\right].
			\end{cases}
		\end{equation*} 
	\end{claim}
	\begin{proof}[Proof of Claim 2]
		By plugging the value $x_{2,1}=\frac{2s-1}{2}+\frac{\sqrt{4s-8ns+12ks^2+28s^2+1}}{2}$ into $x$ of $\Phi(x,s)$ yields that
		\begin{equation*}
			\begin{split}
				\Phi(x_{2,1},s)&=s\left[ 2n^2+(-15s-4ks-2)n+2k^2s^2+16ks^2+2ks+27s^2+\frac{15}{2}s+1\right]\\&+s\left( -2n+\frac{11}{2}s+2ks+1\right)\sqrt{4s-8ns+12ks^2+28s^2+1}\\
				&\triangleq s\cdot h_2(n).
			\end{split}
		\end{equation*}
	Let $\varphi_1(n)=2n^2+(-15s-4ks-2)n+2k^2s^2+16ks^2+2ks+27s^2+\frac{15}{2}s+1$ and $\varphi_2(n)=-2n+\frac{11}{2}s+2ks+1$. Then we obtain	
	\begin{equation*}
		h_2(n)=\varphi_1(n)+\varphi_2(n)\sqrt{4s-8ns+12ks^2+28s^2+1}.
	\end{equation*}
 	It can be checked that $\varphi_1(n)\geq \varphi_1\left( (\frac{3}{2}k+\frac{7}{2})s+\frac{5}{8}\right)=\frac{1}{32}\left[ (16k^2 + 16k - 32)s^2 + (8k - 4)s + 17\right] >0$ for $k\geq1$ and $s\geq1$. In addition, we have $\varphi_2(n)\geq0\Leftrightarrow n\leq \left( k+\frac{11}{4}\right)s+\frac{1}{2} $. Hence for $n\in \left[ (k+2)s, \left( k+\frac{11}{4}\right)s+\frac{1}{2} \right] $, $\Phi(x_{2,1},s)>0$.
 	
 	The Cauchy's interlace theorem (see Lemma \ref{cauchy}) says that the second largest root of $f(x)$, denoted by $\xi_f$, is no more than $n-(k+2)s-1$, i.e., $\xi_f\leq n-(k+2)s-1$. Then we consider the following inequality:
 	\begin{equation*}
 		\begin{split}
 			&x_{2,1}=\frac{2s-1}{2}+\frac{\sqrt{4s-8ns+12ks^2+28s^2+1}}{2}\geq n-(k+2)s-1\\
 			&\Leftrightarrow \sqrt{4s-8ns+12ks^2+28s^2+1}\geq 2[n-(k+3)s-\frac{1}{2}]\\
 			&\Leftrightarrow -n^2+(4s+2ks+1)n-k^2s^2-3ks^2-ks-2s^2-2s\geq 0\\
 			&\Leftrightarrow ks+2s-\frac{\sqrt{4ks^2+8s^2+1}}{2}+\frac{1}{2}\leq n\leq ks+2s+\frac{\sqrt{4ks^2+8s^2+1}}{2}+\frac{1}{2}.
 		\end{split}
 	\end{equation*}
 	
 	Easily, we get
 	\begin{equation*}
 		ks+2s-\frac{\sqrt{4ks^2+8s^2+1}}{2}+\frac{1}{2}<(k+2)s<\left( k+\frac{11}{4}\right) s+\frac{1}{2}<ks+2s+\frac{\sqrt{4ks^2+8s^2+1}}{2}+\frac{1}{2}.
 	\end{equation*}
 	Thus, $x_{2,1}>n-(k+2)s-1\geq \xi_f$ for $n\in\left[ (k+2)s, \left( k+\frac{11}{4}\right)s+\frac{1}{2} \right] $, implying $x_{2,1}>\rho_f$.
 	
 	For $n\in \left(\left( k+\frac{11}{4}\right)s+\frac{1}{2}, (\frac{3}{2}k+\frac{7}{2})s+\frac{1}{2}+\frac{1}{8s}\right]$, let
 	\begin{equation*}
 		\begin{split}
 			h_3(n)&=(\varphi_1(n))^2-(-\varphi_2(n)\sqrt{4s-8ns+12ks^2+28s^2+1})^2\\
 			&=4n^4+(-28s-16ks-8)n^3+[(24k^2+72k+45)s^2+(24k+42)s+4]n^2+\left[ \left( -16k^3\right.\right.\\&
 			\left.\left.-60k^2-32k+48\right)s^3+(-24k^2-72k-45)s^2+(-8k-14)s \right]n+ \left( 4k^4+16k^3-12k^2\right.\\
 			&\left.-115k-118\right) s^4+(8k^3+30k^2+16k-24)s^3 +(4k^2+12k+8)s.
 		\end{split}
 	\end{equation*}
 
 	An easy calculation shows that
 	\begin{equation*}
 		\begin{split}
 			&h'_3(n)=16n^3+(-84s-48ks-24)n^2+(48k^2s^2+144ks^2+48ks+90s^2+84s+8)n\\
 			&+(-16k^3-60k^2-32k+48)s^3+(-24k^2-72k-45)s^2+(-8k-14)s,
 		\end{split}
 	\end{equation*}
 	and
 	\begin{equation*}
 			h''_3(n)=2\left[ 24n^2+(-84s-48ks-24)n+24k^2s^2+72ks^2+24ks+45s^2+42s+4\right].
 	\end{equation*}
 	
 	If the equation $h_3(n)=0$ has four real roots, we will write them as $r_1$, $r_2$, $r_3$, and $r_4$ in non-increasing order. Observe the following inequality with respect to: 	
 	\begin{align*}
 		&ks+2s+\frac{\sqrt{4ks^2+8s^2+1}}{2}+\frac{1}{2}<\left( \frac{3}{2}k+\frac{7}{2}\right)s+\frac{1}{2}\\
 		&\Leftrightarrow \sqrt{4ks^2+8s^2+1}<ks+3s\\
 		&\Leftrightarrow -(k^2+2k+1)s^2+1<0,
 	\end{align*}
 	we have $ks+2s+\frac{\sqrt{4ks^2+8s^2+1}}{2}+\frac{1}{2}<\left( \frac{3}{2}k+\frac{7}{2}\right)s+\frac{1}{2}$ always holds.  Therefore, for $k\geq1$ and $s\geq1$, we calculate the following three systems (see Fig \ref{fig}):
 	\begin{itemize}
 		\item  \begin{equation*}
 			\begin{cases}
 				h_3\left( \left( k+\frac{11}{4}\right)s+\frac{1}{2}\right)=\left( k^2+\frac{7}{4}k+\frac{49}{64} \right)s^4+\left( k+\frac{7}{8}\right)s^2+\frac{1}{4}>0,\\
 				h'_3\left( \left( k+\frac{11}{4}\right)s+\frac{1}{2}\right)=-(8k+7)s^3-4s<0,\\
 				h''_3\left( \left( k+\frac{11}{4}\right)s+\frac{1}{2}\right)=-24ks^2-9s^2-4<0.
 			\end{cases}
 		\end{equation*}
 	\item \begin{equation*}
 		\begin{split}
 			&h_3\left( ks+2s+\frac{\sqrt{4ks^2+8s^2+1}}{2}+\frac{1}{2}\right)\\
 			&=(14s^3+7ks^3)\sqrt{4ks^2+8s^2+1}-(34ks^4+2ks^2+4s^2+40s^4+7k^2s^4)\\
 			&=-\frac{(49k^4+280k^3+540k^2+368k+32)s^8+(28k^3+143k^2+236k+124)s^6+(4k^2+16k+16)s^4}{(14s^3+7ks^3)\sqrt{4ks^2+8s^2+1}+(34ks^4+2ks^2+4s^2+40s^4+7k^2s^4)}<0.
 		\end{split}
 	\end{equation*}
 	\item 	\begin{equation*}
 		\begin{cases}
 			h_3\left( \left( \frac{3}{2}k+\frac{7}{2}\right)s+\frac{1}{2}\right)=\frac{1}{4}\left( k^4+2k^3-3k^2-4k+4 \right)s^4-\frac{1}{4}\left(2k^2+10k+13\right)s^2+\frac{1}{4},\\
 			h'_3\left( \left( \frac{3}{2}k+\frac{7}{2}\right)s+\frac{1}{2}\right)=(2k^3+9k^2+19k+20)s^3-(2k+7)s>0,\\
 			h''_3\left( \left( \frac{3}{2}k+\frac{7}{2}\right)s+\frac{1}{2}\right)=2(6k^2+30k+45)s^2-4>0.
 		\end{cases}
 	\end{equation*}
 	\end{itemize} 	
  
  	By observing the above systems, we get the following relations:
 	\begin{table}[htbp]
 		\centering
 		\small
 		\begin{tabular}{|c|c|c|}
 			\hline
 			\multirow{2}*{$n\in\left[(k+2)s,\left( k+\frac{11}{4}\right)s+\frac{1}{2}  \right)$}   & $\Phi(x_{2,1},s)>0$  &  \multirow{2}*{$x_{2,1}>\rho_f>x_{2,2}$}\\
 			 \cline{2-2}& $x_{2,1}> n-(k+2)s-1\geq\xi_f$ & \\
 			 \hline
 			 \multirow{3}*{$n\in\left[\left( k+\frac{11}{4}\right)s+\frac{1}{2}, r_2  \right)$} & $h_3(n)>0$  &  \multirow{3}*{$x_{2,1}>\rho_f>x_{2,2}$}\\
 			 \cline{2-2}& $\Phi(x_{2,1},s)>0$ & \\
 			 \cline{2-2} & $x_{2,1}> n-(k+2)s-1\geq\xi_f$ & \\
 			 \hline
 			 \multirow{3}*{$n\in\left[r_2,   ks+2s+\frac{\sqrt{4ks^2+8s^2+1}}{2}+\frac{1}{2}\right)$} & $h_3(n)\leq0$ & \multirow{3}*{$\rho_f\geq x_{2,1}>\xi_f$}\\
 			 \cline{2-2}& $\Phi(x_{2,1},s)\leq0$ & \\
 			 \cline{2-2} & $x_{2,1}> n-(k+2)s-1\geq\xi_f$ & \\
 			 \hline
 			 \scriptsize{$n\in\left[ks+2s+\frac{\sqrt{4ks^2+8s^2+1}}{2}+\frac{1}{2},(\frac{3}{2}k+\frac{7}{2})s+\frac{1}{2}+\frac{1}{8s}\right] $} & $n-(k+2)s-1\geq x_{2,1}$ & $\rho_f>x_{2,1}$\\
 			 \hline
 		 \end{tabular}
 	\end{table}
	
	\begin{figure}[htbp]
		\centering 
		\includegraphics[width=10cm]{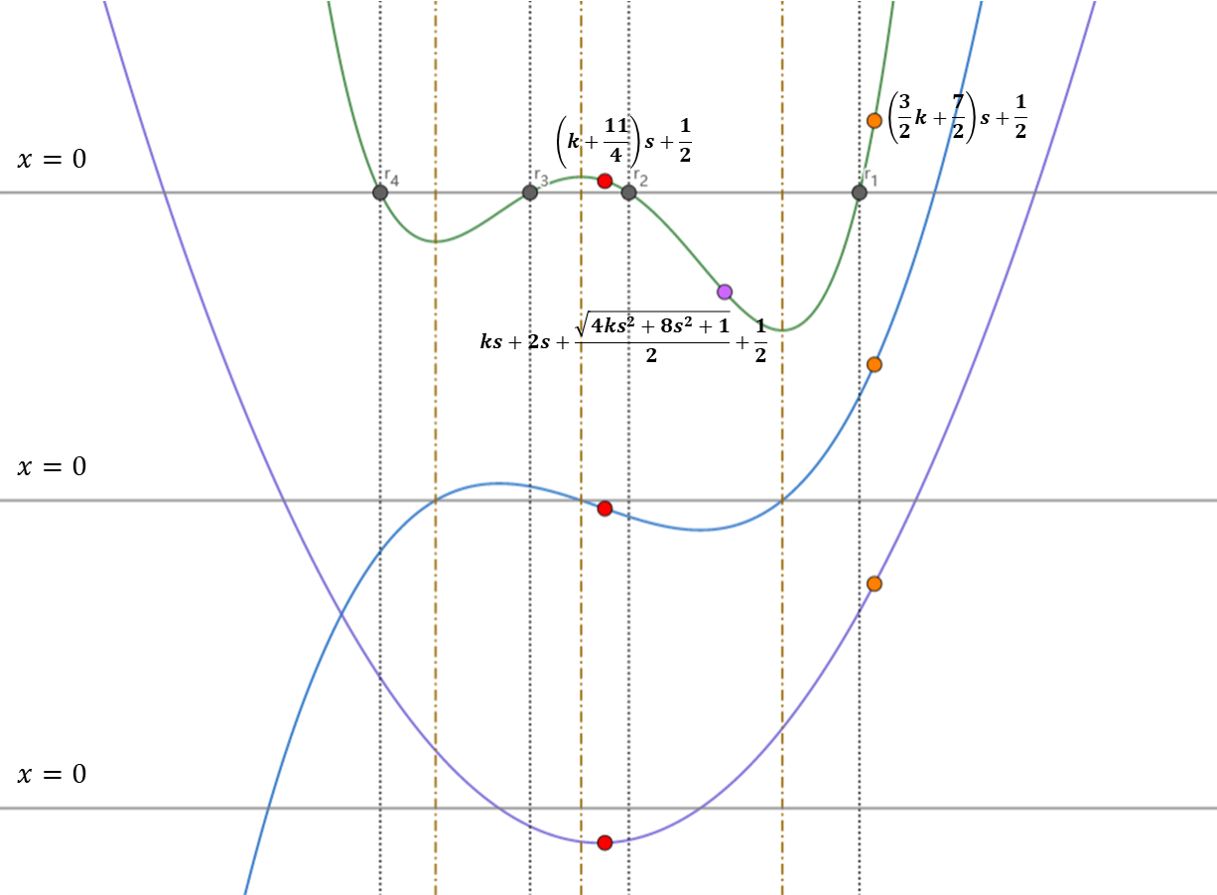}
		\caption{Three functions $h_3(n)$, $h'_3(n)$ and $h''_3(n)$.}
		\label{fig}
	\end{figure}
	
	Let $n_0=r_2$. Then 
	\begin{equation}
		\begin{cases}
			g_2(\rho_f)<0, & \mbox{if } n\in \left[(k+2)s, n_0 \right), \\
			g_2(\rho_f)\geq0, & \mbox{if } n\in \left[n_0,  (\frac{3}{2}k+\frac{7}{2})s+\frac{1}{2}+\frac{1}{8s}\right].
		\end{cases}
	\end{equation}
	
	This completes the proof of Claim 2.
	\end{proof}

	Combining the proof of Claim 2, one can see that there exists $s_0=s_0(n_0)\in [1,\lfloor\frac{n}{k+2}\rfloor]$ such that 
	\begin{equation}
		\begin{cases}
			\frac{\partial \Phi(x,s)}{\partial s}>0, & \mbox{if } x=\rho_f \mbox{ and }s\in \left[1, s_0 \right), \\
			\frac{\partial \Phi(x,s)}{\partial s}\leq0, & \mbox{if } x=\rho_f \mbox{ and } s\in \left[s_0, \lfloor\frac{n}{k+2}\rfloor\right].
		\end{cases}
	\end{equation}

	It can be checked that, by the Implicit–Function Theorem, $x$ is determined by $s$ from equation $\Phi(x, s) = 0$ in $D=\{(x,s)|x=\rho_f,1\leq s\leq \lfloor\frac{n}{k+2}\rfloor\}$. Then we get
	\begin{equation*}
		\frac{\partial \Phi(x,s)}{\partial x}\frac{\partial x(s)}{\partial s}+\frac{\partial \Phi(x,s)}{\partial s}=0.
	\end{equation*}
	
	Based on the above discussion, it follows that
	\begin{equation}\label{sys1}
		\begin{cases}
			\frac{\partial x(s)}{\partial s}=-\frac{\frac{\partial \Phi(x,s)}{\partial s}}{\frac{\partial \Phi(x,s)}{\partial x}}<0, & \mbox{if } x=\rho_f \mbox{ and } s\in \left[1, s_0 \right), \\
			\frac{\partial x(s)}{\partial s}=-\frac{\frac{\partial \Phi(x,s)}{\partial s}}{\frac{\partial \Phi(x,s)}{\partial x}}\geq 0, & \mbox{if } x=\rho_f \mbox{ and } s\in \left[s_0, \lfloor\frac{n}{k+2}\rfloor\right].
		\end{cases}
	\end{equation}
	
	Thus, we have
	$\lambda_{1}(K_s\vee \left( K_{n-(k+2)s}\cup(k+1)sK_1\right))\leq\max\left\lbrace \lambda_{1}(K_t\vee \left( K_{n-(k+2)t}\cup(k+1)tK_1\right)),\right.\\
	\left.\lambda_{1}\left( K_{\lfloor\frac{n}{k+2}\rfloor}\vee \left( K_{n-(k+2)\lfloor\frac{n}{k+2}\rfloor}\cup(k+1)\lfloor\frac{n}{k+2}\rfloor K_1\right)\right) \right\rbrace $ for $t\leq s\leq \lfloor\frac{n}{k+2}\rfloor$. 
	
	This completes the proof.
	\end{proof}
	
	By Lemma \ref{lem4_1} and Lemma \ref{lemma4_2}, we deduce the following theorem directly.
	\begin{theorem}\label{leaf_k}
		Let $G$ be an $n$-vertex $t$-connected graph. If
		\begin{equation*}
			\lambda_1(G)\geq\max\left\lbrace \lambda_{1}(K_t\vee \left( K_{n-(k+2)t}\cup(k+1)tK_1\right)),\lambda_{1}\left( K_{\lfloor\frac{n}{k+2}\rfloor}\vee \left( K_{n-(k+2)\lfloor\frac{n}{k+2}\rfloor}\cup(k+1)\lfloor\frac{n}{k+2}\rfloor K_1\right)\right) \right\rbrace,
		\end{equation*} then $G$ has a spanning tree with leaf degree at most $k$ unless $G\cong K_t\vee(K_{n-(k+2)t}\cup(k+1)tK_1)$ or $G\cong K_{\lfloor\frac{n}{k+2}\rfloor}\vee \left( K_{n-(k+2)\lfloor\frac{n}{k+2}\rfloor}\cup(k+1)\lfloor\frac{n}{k+2}\rfloor K_1\right)$.
	\end{theorem}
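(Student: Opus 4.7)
The plan is to combine Lemma~\ref{lem4_1} and Lemma~\ref{lemma4_2} directly: the first lemma pins any ``bad'' graph $G$ (one without the desired spanning tree) under the spectral radius of some $K_s \vee (K_{n-(k+2)s} \cup (k+1)sK_1)$, and the second lemma bounds this family uniformly by the maximum of its two endpoints in $s$. The $t$-connectedness hypothesis is needed only to force the parameter $s$ into the range $[t,\lfloor n/(k+2)\rfloor]$ on which Lemma~\ref{lemma4_2} operates.

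In detail, I would argue by contradiction: assume $G$ has no spanning tree with leaf degree at most $k$. By Theorem~\ref{leafk} there is a nonempty $S\subseteq V(G)$ with $i(G-S)\ge (k+1)|S|$. Set $s=|S|$. The key preliminary observation is that $i(G-S)\ge k+1\ge 2$ (since $k\ge 1$ and $s\ge 1$), so the isolated vertices of $G-S$ together with any other component show that $G-S$ is disconnected; hence $S$ is a vertex cut, and $t$-connectedness gives $s\ge t$. On the other hand, counting vertices yields $(k+2)s \le s + i(G-S) + \text{(other vertices)} \le n$, i.e.\ $s\le \lfloor n/(k+2)\rfloor$. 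Thus $t\le s\le \lfloor n/(k+2)\rfloor$, precisely the regime in which Lemma~\ref{lemma4_2} applies.

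Now apply Lemma~\ref{lem4_1} to obtain
\begin{equation*}
\lambda_1(G)\le \lambda_1\bigl(K_s\vee (K_{n-(k+2)s}\cup (k+1)sK_1)\bigr),
\end{equation*}
and then Lemma~\ref{lemma4_2} to bound the right-hand side by
\begin{equation*}
\max\bigl\{\lambda_1(K_t\vee (K_{n-(k+2)t}\cup(k+1)tK_1)),\ \lambda_1(K_{\lfloor n/(k+2)\rfloor}\vee (K_{n-(k+2)\lfloor n/(k+2)\rfloor}\cup(k+1)\lfloor n/(k+2)\rfloor K_1))\bigr\}.
\end{equation*}
This contradicts the hypothesis on $\lambda_1(G)$, completing the proof of existence.

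For the exceptional cases, if $\lambda_1(G)$ equals (rather than strictly exceeds) the above maximum, then equality must hold in both lemmas: in Lemma~\ref{lem4_1} this forces $G$ to be the full graph $K_s\vee(K_{n-(k+2)s}\cup (k+1)sK_1)$ rather than a proper spanning subgraph (by Lemma~\ref{radius}), and in Lemma~\ref{lemma4_2} the ``if and only if'' clause forces $s=t$ or $s=\lfloor n/(k+2)\rfloor$; these are exactly the two graphs listed in the ``unless'' clause. I expect the proof itself to be essentially a two-line deduction from the two lemmas; the only real subtlety is the bookkeeping in the preceding paragraph — verifying that $s\ge t$ (via the cut argument) and that the extremal graphs are themselves $t$-connected and genuinely lack a spanning tree with leaf degree at most $k$ (the latter follows by taking $S$ to be the central clique and checking $i(G-S)=(k+1)|S|$, which violates Kaneko's strict inequality).
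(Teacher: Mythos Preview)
Your proposal is correct and matches the paper's approach exactly: the paper's entire proof of Theorem~\ref{leaf_k} is the single line ``By Lemma~\ref{lem4_1} and Lemma~\ref{lemma4_2}, we deduce the following theorem directly.'' You have actually supplied more than the paper does---in particular the verification that $s\ge t$ via the vertex-cut argument is a detail the paper leaves implicit but which is indeed needed to put $s$ in the range where Lemma~\ref{lemma4_2} applies.
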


If $t=1$, by Theorem \ref{leaf_k}, we can get the following result. This generalizes a result of Theorem 1.5 in \cite{ao}.
\begin{corollary}\label{cor}
	Let $G$ be a connected graph of order $n\geq 2k+12$, where $k\geq1$ is an integer. If $\lambda_1(G)\geq \lambda_1(K_1\vee \left( K_{n-(k+2)}\cup(k+1)K_1\right))$, then $G$ has a spanning tree with leaf degree at most $k$ unless $G\cong K_1\vee(K_{n-(k+2)}\cup(k+1)K_1)$.
\end{corollary}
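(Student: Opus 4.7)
The plan is to deduce the corollary from Theorem \ref{leaf_k} applied with $t=1$. Set
\begin{equation*}
G_1 := K_1 \vee (K_{n-(k+2)} \cup (k+1)K_1) \quad \text{and} \quad G_2 := K_{s_0} \vee (K_{n-(k+2)s_0} \cup (k+1)s_0\, K_1),
\end{equation*}
where $s_0 := \lfloor n/(k+2) \rfloor$. Since every connected graph is $1$-connected, Theorem \ref{leaf_k} with $t=1$ asserts that if $\lambda_1(G) \geq \max\{\lambda_1(G_1), \lambda_1(G_2)\}$, then $G$ has a spanning tree with leaf degree at most $k$, unless $G \cong G_1$ or $G \cong G_2$. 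Thus it suffices to verify that, under the hypothesis $n \geq 2k+12$, we have $\lambda_1(G_1) \geq \lambda_1(G_2)$; the assumption $\lambda_1(G) \geq \lambda_1(G_1)$ then automatically upgrades to $\lambda_1(G) \geq \max\{\lambda_1(G_1), \lambda_1(G_2)\}$. One also checks that for $n \geq 2k+12$ one has $s_0 \geq 2$, hence $G_2 \not\cong G_1$, so the exceptional case $G \cong G_2$ collapses once the strict inequality $\lambda_1(G_1) > \lambda_1(G_2)$ is established.

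For the spectral comparison I would first obtain a clean lower bound for $\lambda_1(G_1)$: the graph $G_1$ contains $K_{n-k-1}$ as an induced subgraph, namely the apex vertex together with the $K_{n-(k+2)}$ component, so Lemma \ref{radius} yields
\begin{equation*}
\lambda_1(G_1) \geq \lambda_1(K_{n-k-1}) = n-k-2.
\end{equation*}
Next, I would upper-bound $\lambda_1(G_2)$ via the explicit cubic $f(x)$ derived in the proof of Lemma \ref{lemma4_2} evaluated at $s = s_0$. Since $f$ has positive leading coefficient and $\lambda_1(G_2)$ as its largest root, it suffices to show $f(n-k-2) > 0$ (together with the derivative sign needed to place $n-k-2$ to the right of the largest root). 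Substituting $s = s_0$ with the continuous estimate $s_0 \leq n/(k+2)$ and expanding reduces the claim to an inequality polynomial in $n$ and $k$ that is positive throughout the range $n \geq 2k+12$; this is the "simple calculation" the paper invokes. Combining the two estimates yields $\lambda_1(G_1) \geq n-k-2 > \lambda_1(G_2)$, as desired.

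Finally, I would verify that $G_1$ itself does not admit a spanning tree with leaf degree at most $k$, so the exception in the corollary is genuinely necessary. Applying Theorem \ref{leafk} with $S = \{v_0\}$, the apex of $G_1$, one obtains $G_1 - S \cong K_{n-(k+2)} \cup (k+1)K_1$, so $i(G_1 - S) = k+1 = (k+1)|S|$, which violates Kaneko's strict inequality $i(G-S) < (k+1)|S|$. The main obstacle will be the explicit cubic substitution in the middle step: since $s_0 = \lfloor n/(k+2) \rfloor$ depends on the residue of $n$ modulo $k+2$, one must either work with the continuous estimate $s_0 \leq n/(k+2)$ and show the slack absorbs the floor error, or handle the $k+2$ residue classes uniformly. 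The threshold $n \geq 2k+12$ is precisely the margin needed for this substitution to succeed across all residues.
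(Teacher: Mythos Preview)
Your approach is essentially what the paper does: it derives the corollary from Theorem~\ref{leaf_k} with $t=1$ together with a ``simple calculation'' comparing $\lambda_1(G_1)$ and $\lambda_1(G_2)$, and your plan of bounding $\lambda_1(G_1) > n-k-2$ via the induced $K_{n-k-1}$ and then checking $f(n-k-2)>0$ at $s=s_0$ is a sound way to carry out that calculation. One small caveat: Theorem~\ref{leaf_k} as stated uses a strict inequality $\lambda_1(G)>\max\{\cdots\}$, so to handle the equality case $\lambda_1(G)=\lambda_1(G_1)$ you should appeal directly to Lemmas~\ref{lem4_1} and~\ref{lemma4_2} (whose equality characterizations give exactly $G\cong G_1$), rather than to the theorem itself.
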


\begin{remark}
	Applying the technique in Lemma \ref{lemma4_2}, one can further study the relationships between the spectral radius ($A_\alpha$-spectral radius) and the existence of other subgraphs with constrained degree (such as $[1,b]$-odd factors, regular factors, $k$-trees, etc.). Furthermore, inspired by \eqref{sys1}, we can explore similar problems in graphs that possess certain special graph parameters.
\end{remark}

	\section{Two spectral conditions for the existence of a spanning tree with leaf degree at most $k$ in graphs}
	In this section, we present some sufficient conditions for a graph to have a spanning tree with leaf degree at most $k$.
	\begin{lemma}\cite{gu2}\label{k1101}
		Let $G$ be a graph with $n$ vertices and at least one edge. Suppose that $S\subset V(G)$ such that $G-S$ is disconnected. Let $X$  and $Y$ be disjoint vertex subsets of $G-S$ such that $X\cup Y=V(G)-S$ with $|X|\leq|Y|$. Then $$|X|\leq\frac{\mu_1-\mu_{n-1}}{2\mu_1}{n},$$  and
		\begin{equation}\label{eq20}
			|S|\geq\frac{2\mu_{n-1}}{\mu_1-\mu_{n-1}}|X|,
		\end{equation}
		with each equality holding only when $|X|=|Y|$.
	\end{lemma}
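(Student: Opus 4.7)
The plan is to work inside the $2$-dimensional subspace $V\subset \mathbf{1}^{\perp}$ spanned by the centered indicators of $X$ and $Y$, and pin the two extreme values of the Laplacian Rayleigh quotient on $V$ between $\mu_{n-1}$ and $\mu_1$. Setting $x=|X|$, $y=|Y|$, $s=|S|$, the structural fact that $X$ and $Y$ lie in different components of $G-S$ forces $e(X,Y)=0$. I would then introduce the test vectors $\hat f_1=\chi_X-(x/n)\mathbf{1}$ and $\hat f_2=\chi_Y-(y/n)\mathbf{1}$, both in $\mathbf{1}^\perp$, and record the identities $\hat f_1^{T}L\hat f_1=e(X,S)$, $\hat f_2^{T}L\hat f_2=e(Y,S)$, $\|\hat f_1\|^2=x(n-x)/n$, $\|\hat f_2\|^2=y(n-y)/n$, $\langle\hat f_1,\hat f_2\rangle=-xy/n$, and, crucially, the $L$-orthogonality $\hat f_1^{T}L\hat f_2=-e(X,Y)=0$.

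Since $V\subset\mathbf{1}^\perp$, the Rayleigh quotient $R(v)=v^{T}Lv/\|v\|^2$ satisfies $\mu_{n-1}\le R(v)\le\mu_1$ for every nonzero $v\in V$; let $\theta_1\le\theta_2$ be its min and max on $V$, so $\mu_{n-1}\le\theta_1\le\theta_2\le\mu_1$. Optimizing $R(\alpha\hat f_1+\beta\hat f_2)$ in $(\alpha,\beta)$ gives a quadratic $p(\theta)$ with positive leading coefficient and roots $\theta_1,\theta_2$, so that $p(\mu_1)\ge 0$ and $p(\mu_{n-1})\ge 0$. These translate into two product inequalities
\[
\Bigl(\tfrac{\mu_1 x(n-x)}{n}-e(X,S)\Bigr)\Bigl(\tfrac{\mu_1 y(n-y)}{n}-e(Y,S)\Bigr)\ge\Bigl(\tfrac{\mu_1 xy}{n}\Bigr)^{2}
\]
and its $\mu_{n-1}$-analogue, where the bracketed factors are nonnegative by the single-vector Rayleigh bounds applied to $\hat f_1$ and $\hat f_2$ individually.

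Next, I apply AM-GM to linearize each product inequality: the $\mu_1$ version yields $e(X,S)+e(Y,S)\le \mu_1 s(x+y)/n$, and the $\mu_{n-1}$ version yields $e(X,S)+e(Y,S)\ge \mu_{n-1}\bigl[n(x+y)-(x-y)^2\bigr]/n$. Chaining these, substituting $n=x+y+s$, and using the identity $(x+y)^2-(x-y)^2=4xy$ collapses everything to $4\mu_{n-1} xy\le (\mu_1-\mu_{n-1})(x+y)s$; the hypothesis $|X|\le|Y|$ then gives $\tfrac{4xy}{x+y}\ge 2x$, producing $|S|\ge \tfrac{2\mu_{n-1}}{\mu_1-\mu_{n-1}}|X|$ with equality forcing $x=y$. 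The first inequality $|X|\le \tfrac{\mu_1-\mu_{n-1}}{2\mu_1}n$ drops out by substituting $s=n-x-y$ back and using $y\ge x$ once more.

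The main obstacle is resisting a naive one-vector approach: a lone $\chi_X-(x/n)\mathbf{1}$ only traps $e(X,S)$ between $\mu_{n-1}x(n-x)/n$ and $\mu_1x(n-x)/n$ and cannot combine $\mu_1$ and $\mu_{n-1}$ into a single inequality relating $|S|$ to $|X|$. The key observation that makes the argument close is that the $L$-orthogonality $\hat f_1^{T}L\hat f_2=0$ comes for free from $e(X,Y)=0$, and this decoupling is exactly what lets both spectral endpoints enter symmetrically via the two AM-GM steps on the characteristic polynomial $p(\theta)$ of the restricted Rayleigh quotient.
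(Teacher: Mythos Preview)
The paper does not supply a proof of this lemma; it is quoted verbatim from Gu and Liu \cite{gu2} and then used as a black box in the proof of Theorem~\ref{k1210}. So there is no in-paper argument to compare your proposal against.

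Your sketch is correct and is in fact the standard two-vector Laplacian interlacing argument that underlies the result in \cite{gu2}. The identities you record ($\hat f_i^{T}L\hat f_i=e(G_i,S)$, $\langle\hat f_1,\hat f_2\rangle=-xy/n$, and the crucial $\hat f_1^{T}L\hat f_2=-e(X,Y)=0$) are right, the generalized-eigenvalue step giving $p(\mu_{n-1})\ge 0$ and $p(\mu_1)\ge 0$ is the usual interlacing for a $2$-dimensional subspace of $\mathbf{1}^\perp$, and the two AM--GM linearizations followed by the substitution $n=x+y+s$ do collapse exactly to $4\mu_{n-1}xy\le(\mu_1-\mu_{n-1})(x+y)s$, from which both asserted inequalities follow via $y\ge x$. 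One point to make explicit: the lemma as stated here only says $X\cup Y=V(G)-S$; your argument requires $e_G(X,Y)=0$, i.e., that $X$ and $Y$ are each unions of components of $G-S$. That is the intended hypothesis (and how the lemma is invoked in Theorem~\ref{k1210}), but it is not automatic from the wording, so you should flag it. Finally, note the degenerate case $|S|=0$: then $\hat f_1,\hat f_2$ are linearly dependent and your $2$-dimensional setup collapses, but $G$ is disconnected, $\mu_{n-1}=0$, and both inequalities reduce to the trivial $|X|\le n/2$.
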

	

	Based on Lemma \ref{k1101}, we can give a same sufficient condition for a graph to have a spanning tree with leaf degree at most $k$ in terms of the Laplacian eigenvalues as below.
	\begin{theorem}\label{k1210}
		Let  $G$ be  a graph with the Laplacian eigenvalues $\mu_1\geq\mu_2\geq\cdots\geq\mu_{n-1}\geq\mu_n=0$. If $\mu_1<(k+1)\mu_{n-1}$, then $G$ has a spanning tree with leaf degree at most $k$.
	\end{theorem}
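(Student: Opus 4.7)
The plan is to argue by contrapositive via Kaneko's criterion (Theorem \ref{leafk}). Suppose $G$ does not have a spanning tree with leaf degree at most $k$; then there is a nonempty $S\subseteq V(G)$ with $i(G-S)\geq (k+1)|S|$. Write $I$ for the set of isolated vertices of $G-S$, so $|I|\geq (k+1)|S|\geq 2$ and hence $G-S$ is disconnected. Since every vertex of $I$ is an isolated component of $G-S$, any subset $X\subseteq I$ together with $Y=(V(G)\setminus S)\setminus X$ gives a disconnecting bipartition of $V(G)\setminus S$, and so Lemma \ref{k1101} is applicable as long as $|X|\leq |Y|$.

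I would then split according to how large $|I|$ is relative to $(n-|S|)/2$. In the first case, $|I|\leq\lfloor(n-|S|)/2\rfloor$, I would set $X=I$; Lemma \ref{k1101} directly yields
\[
|S|\ \geq\ \frac{2\mu_{n-1}|I|}{\mu_1-\mu_{n-1}}\ \geq\ \frac{2(k+1)\mu_{n-1}|S|}{\mu_1-\mu_{n-1}},
\]
which rearranges to $\mu_1\geq(2k+3)\mu_{n-1}$ and contradicts $\mu_1<(k+1)\mu_{n-1}$. In the second case, $|I|>\lfloor(n-|S|)/2\rfloor$, I would pick $X$ to be any subset of $I$ of size $\lfloor(n-|S|)/2\rfloor$, so that $|Y|=\lceil(n-|S|)/2\rceil\geq|X|$. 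Applying Lemma \ref{k1101} (in its strict form when $n-|S|$ is odd, since then $|X|<|Y|$) and rearranging gives $|S|\mu_1\geq (n-1)\mu_{n-1}$; combined with $\mu_1<(k+1)\mu_{n-1}$ and integrality, this forces $(k+1)|S|\geq n$. But the trivial count $(k+1)|S|\leq i(G-S)\leq n-|S|$ yields $|S|\leq n/(k+2)$, so $(k+1)|S|\leq (k+1)n/(k+2)<n$, the desired contradiction.

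The main obstacle I anticipate is the second case, where we cannot take $X=I$ without violating the condition $|X|\leq|Y|$ of Lemma \ref{k1101}, and must instead truncate $X$ to match the smaller side of $V(G)\setminus S$; carefully tracking the parity of $n-|S|$ is needed to invoke the strict form of the lemma and squeeze out a bound on $|S|$ strong enough to collide with the trivial upper bound $(k+2)|S|\leq n$ coming from Kaneko's inequality and the disjointness of $S$ and $I$. Everything else is straightforward algebra once the right $X$ has been chosen in each regime.
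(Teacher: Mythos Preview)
Your argument is correct and rests on the same two ingredients as the paper's proof—the contrapositive via Theorem~\ref{leafk} and an application of Lemma~\ref{k1101} to a well-chosen bipartition of $V(G)\setminus S$—but your case division is organized differently. The paper splits according to whether $c(G-S)>i$ or $c(G-S)=i$ (and then on the parity of $i$), always taking $X$ to consist of roughly $i/2$ of the isolated vertices; in every case the inequality $|S|>\frac{(i-1)\mu_{n-1}}{\mu_1-\mu_{n-1}}>\frac{i-1}{k}$ (or the corresponding variant) directly contradicts $i\geq(k+1)|S|$, with no further counting needed. You instead split on whether $|I|\leq\lfloor(n-|S|)/2\rfloor$. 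Your first case, taking $X=I$, is very clean and gives the stronger conclusion $\mu_1\geq(2k+3)\mu_{n-1}$ in one stroke; your second case, however, requires the extra observation $(k+2)|S|\leq n$ to close, which the paper's arrangement avoids. Both routes are short and valid; the paper's is marginally more uniform, while your Case~1 is more transparent than the paper's corresponding case $c>i$.
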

	
	\begin{proof}
		Assume to the contrary that $G$ has no spanning trees with leaf degree at most $k$. Then by Theorem \ref{leafk}, we get $i(G-S)\geq (k+1)|S|$. Let $G_1,G_2,\dots,G_c$ be components of $G-S$ with $|V(G_1)|\leq|V(G_2)|\leq\dots\leq|V(G_c)|$, where $c=c(G-S)$. Clearly, $c = c(G-S)\geq i(G-S)=i$. If $c>i$, let $X = \cup_{1\leq j\leq \lceil\frac{i+1}{2}\rceil}V(G_j)$ and $Y=V(G)-S-X$. Then $\frac{i+1}{2}\leq|X|\leq |Y|$. By \eqref{eq20} and $(k+1)\mu_{n-1}> \mu_1$,
		\begin{equation*}
			|S|\geq\frac{2\mu_{n-1}}{\mu_1-\mu_{n-1}}|X|\geq\frac{(i+1)\mu_{n-1}}{\mu_1-\mu_{n-1}}>\frac{i+1}{k}.
		\end{equation*}
		Hence $i(G-S)< (k+1)|S|-|S|-1$, a contradiction. If $c=i$ and $i$ is even, we let $X=\cup_{1\leq j\leq \frac{i}{2}}V(G_j)$ and $Y=V(G)-S-X$. Then $|X|=|Y|=\frac{i}{2}$. By \eqref{eq20} and $(k+1)\mu_{n-1}> \mu_1$,
		\begin{equation*}
			|S|=\frac{2\mu_{n-1}}{\mu_1-\mu_{n-1}}|X|=\frac{i\cdot\mu_{n-1}}{\mu_1-\mu_{n-1}}>\frac{i}{k}.
		\end{equation*}
		Thus, $i(G-S)< (k+1)|S|-|S|$, a contradiction. If $c=i$ and $i$ is odd, we let $X=\cup_{1\leq j\leq \frac{i-1}{2}}V(G_j)$ and $Y=V(G)-S-X$. Then $|X|=\frac{i-1}{2}<|Y|=\frac{i+1}{2}$. By \eqref{eq20} and $(k+1)\mu_{n-1}> \mu_1$,
		\begin{equation*}
			|S|>\frac{2\mu_{n-1}}{\mu_1-\mu_{n-1}}|X|=\frac{(i-1)\mu_{n-1}}{\mu_1-\mu_{n-1}}>\frac{i-1}{k}.
		\end{equation*}
		Therefore, $i(G-S)< (k+1)|S|-|S|+1$, i.e., $i(G-S)<(k+1)|S|$, a contradiction.
		
		This completes the proof.
	\end{proof}
	
	\begin{corollary}
		Let  $G$ be a $t$-connected graph with the Laplacian eigenvalues $\mu_1\geq\mu_2\geq\cdots\geq\mu_{n-1}\geq\mu_n=0$, where $t\geq2$. If $\mu_1\leq(k+1)\mu_{n-1}$, then $G$ has a spanning tree with leaf degree at most $k$.
	\end{corollary}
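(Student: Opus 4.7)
The plan is to mimic the proof of Theorem~\ref{k1210} while using $t$-connectivity to absorb the loss of strictness in the hypothesis. First, I would argue by contradiction: assume $G$ has no spanning tree with leaf degree at most $k$, so by Theorem~\ref{leafk} there is a nonempty $S\subseteq V(G)$ with $i:=i(G-S)\geq(k+1)|S|$. Since $|S|\geq 1$, this forces $i\geq k+1\geq 2$, so $G-S$ contains at least two components and $S$ is therefore a vertex cut of $G$. The $t$-connectivity with $t\geq 2$ then yields the crucial extra information $|S|\geq t\geq 2$, which is the key lever absent from the proof of Theorem~\ref{k1210}.

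Next I would split into the same three cases as in Theorem~\ref{k1210}, choosing $X$ and $Y$ exactly as there and applying Lemma~\ref{k1101} with the weak hypothesis $\mu_1\le (k+1)\mu_{n-1}$, which yields $\frac{2\mu_{n-1}}{\mu_1-\mu_{n-1}}\ge\frac{2}{k}$. In Case~1 ($c>i$) this gives $|S|\ge\tfrac{2}{k}|X|\ge\tfrac{i+1}{k}$, hence $k|S|\ge i+1$ and so $i\le k|S|-1=(k+1)|S|-|S|-1\le(k+1)|S|-3$ using $|S|\ge 2$; in Case~2 ($c=i$, $i$ even) one has $|X|=|Y|=i/2$ and thus $|S|\ge i/k$, giving $i\le k|S|=(k+1)|S|-|S|\le(k+1)|S|-2$; in Case~3 ($c=i$, $i$ odd) Lemma~\ref{k1101} still gives strict inequality because $|X|<|Y|$, producing $|S|>\tfrac{i-1}{k}$, hence $k|S|\ge i$ and $i\le(k+1)|S|-|S|\le(k+1)|S|-2$. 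In every case we obtain $i<(k+1)|S|$, contradicting the choice of $S$.

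The only subtlety is that in Cases~1 and 2 the equality clause of Lemma~\ref{k1101} can be active (when $|X|=|Y|$), so Lemma~\ref{k1101} alone no longer suffices—this is precisely where the bound $|S|\ge t\ge 2$ compensates by contributing the extra "$-|S|$" on the right-hand side when one rewrites $k|S|=(k+1)|S|-|S|$. In Case~3, by contrast, $|X|<|Y|$ is forced by parity, so Lemma~\ref{k1101} already yields strict inequality independently of the eigenvalue condition. I expect the main (very mild) obstacle to be verifying that the choices of $X$ in the three cases continue to satisfy $|X|\le|Y|$ and that the sizes still equal $\lceil(i+1)/2\rceil$, $i/2$, and $(i-1)/2$ respectively; this is automatic since the relevant components in each case may be taken to be isolated vertices, so each contributes exactly one to the sum, as was implicit in the proof of Theorem~\ref{k1210}.
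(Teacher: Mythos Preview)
The paper states this corollary without proof, presumably as an immediate consequence of the argument for Theorem~\ref{k1210}; your adaptation of that argument is correct and is precisely the natural route.

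One remark: your emphasis on the bound $|S|\ge t\ge 2$ as the ``key lever'' is overstated. In each of your three cases the chain of inequalities already gives $i\le k|S|$ (in Case~1 even $i\le k|S|-1$), and combining this with the assumed $i\ge(k+1)|S|$ forces $|S|\le 0$, contradicting merely that $S$ is nonempty. Thus the extra slack you extract from $|S|\ge 2$ is never actually used, and the corollary in fact holds for every connected graph; the hypothesis $t\ge 2$ plays no role.
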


	The complement $\overline{G}$ of $G$ is the  graph whose vertex set is $V(G)$ and whose edges are the pairs of nonadjacent vertices of $G$. We now give sufficient conditions for the existence of a spanning tree with leaf degree at most $k$ in a graph in terms of the spectral radius of its complement.
	\begin{theorem}\label{t0403}
		Let  $G$ be  an $n$-vertex connected graph with minimum degree $\delta$ and let $\overline{G}$ be the complement of $G$. If $\lambda_1(\overline{G})<(k+1)\delta-1$, then $G$ has a spanning tree with leaf degree at most $k$.
	\end{theorem}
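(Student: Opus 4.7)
The plan is to argue by contradiction using the Kaneko characterization (Theorem~\ref{leafk}) together with the interlacing property stated in Lemma~\ref{radius}. Suppose $G$ has no spanning tree with leaf degree at most $k$. Then there exists a nonempty subset $S\subseteq V(G)$ with $i(G-S)\geq (k+1)|S|$. Let $I$ denote the set of isolated vertices of $G-S$, so $|I|=i(G-S)\geq (k+1)|S|\geq k+1$.

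Next I would extract two structural consequences from the definition of $I$. First, every vertex $v\in I$ has all of its $G$-neighbors inside $S$ (otherwise $v$ would have a neighbor in $V(G)\setminus S$ and would not be isolated in $G-S$), so
\[
\delta\;\leq\;d_G(v)\;\leq\;|S|,
\]
which yields the crucial lower bound $|S|\geq \delta$. Second, for any two distinct $u,v\in I$, since both lie in $V(G)\setminus S$ and are isolated in $G-S$, the edge $uv$ cannot belong to $E(G)$; hence $uv\in E(\overline{G})$. Therefore $\overline{G}[I]$ is a complete graph $K_{|I|}$, an induced subgraph of $\overline{G}$.

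Combining these two facts with Lemma~\ref{radius} applied to $\overline{G}[I]\cong K_{|I|}$ gives
\[
\lambda_1(\overline{G})\;\geq\;\lambda_1(K_{|I|})\;=\;|I|-1\;\geq\;(k+1)|S|-1\;\geq\;(k+1)\delta-1,
\]
which contradicts the hypothesis $\lambda_1(\overline{G})<(k+1)\delta-1$. This contradiction proves that $G$ must possess a spanning tree with leaf degree at most $k$.

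There is no serious obstacle in this argument; the only content beyond the two cited results is the observation that the isolated vertices of $G-S$ simultaneously (i) force $|S|\geq\delta$ via the minimum degree condition, and (ii) form a clique inside $\overline{G}$. Once both of these are noted, the conclusion is a single application of eigenvalue monotonicity under induced subgraphs.
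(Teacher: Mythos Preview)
Your proof is correct and essentially identical to the paper's own argument: both proceed by contradiction via Theorem~\ref{leafk}, observe that the isolated vertices of $G-S$ force $|S|\geq\delta$ and induce a clique in $\overline{G}$, and then apply eigenvalue interlacing (Lemma~\ref{radius}) to obtain $\lambda_1(\overline{G})\geq(k+1)\delta-1$.
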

	
	\begin{proof}
		Assume to the contrary that $G$ has no spanning trees with leaf degree at most $k$. By Theorem \ref{leafk}, there exists some nonempty subset $S\subseteq V(G)$ such that $i(G-S)\geq(k+1)|S|$. Let $A$ denote the set of isolated vertices in $G-S$. Since the neighbors of each isolated vertex belong to $S$, we have $|S|\geq\delta$, which means that $|A|\geq(k+1)|S|\geq(k+1)\delta$. Since $\overline{G}[A]$ is a clique, we have  $\lambda_1(\overline{G})\geq\lambda_1(\overline{G}[A])=|A|-1\geq(k+1)\delta-1$ by Lemma \ref{radius}, a contradiction.
	\end{proof}

	\section*{Declaration of competing interest}
	There is no conflict of interest.
	
	\section*{Acknowledgement(s)}
	This work was supported by the National Natural Science Foundation of China (Grant No. 61773020). The authors would like to express their sincere gratitude to all the referees for their careful reading and insightful suggestions.

\end{document}